\renewcommand{\Re}{\mathop{\rm Re}}
\newcommand{\eps}{\ensuremath{\varepsilon}}
\renewcommand{\bar}{\overline}
\newcommand{\bC}{\ensuremath{\mathbb{C}}}
\newcommand{\bD}{\ensuremath{\mathbb{D}}}
\newcommand{\bE}{\ensuremath{\mathbb{E}}}
\newcommand{\bN}{\ensuremath{\mathbb{N}}}
\newcommand{\bP}{\ensuremath{\mathbb{P}}}
\newcommand{\bR}{\ensuremath{\mathbb{R}}}
\newcommand{\cB}{\ensuremath{\mathcal{B}}}
\newcommand{\cF}{\ensuremath{\mathcal{F}}}
\newcommand{\cQ}{\ensuremath{\mathcal{Q}}}
\theoremstyle{plain}
\newtheorem{Thm}{Theorem}[section]
\newtheorem{Lem}[Thm]{Lemma}
\newtheorem{Prop}[Thm]{Proposition}
\newtheorem{Cor}[Thm]{Corollary}
\theoremstyle{definition}
\newtheorem{Rem}[Thm]{Remark}
\numberwithin{equation}{section}
\renewcommand\section{\@startsection {section}{1}{\z@}%
                                   {-3.5ex \@plus -1ex \@minus -.2ex}%
                                   {2.3ex \@plus.2ex}%
                                   {\normalfont\large\bf}}
\renewcommand\subsection{\@startsection {subsection}{1}{\z@}%
                                   {-3.5ex \@plus -1ex \@minus -.2ex}%
                                   {2.3ex \@plus.2ex}%
                                   {\normalfont\normalsize\bf}}
\begin{document}
\begin{center}
	{\Large \bf
		Existence of quasi-stationary distributions for downward skip-free Markov chains
	}
\end{center}
\begin{center}
	Kosuke Yamato (Kyoto University)
\end{center}
\begin{center}
	{\small \today}
\end{center}

\begin{abstract}
	For downward skip-free continuous-time Markov chains on non-negative integers stopped at zero, existence of a quasi-stationary distribution is studied.
	The scale function for these processes is introduced and the boundary is classified by a certain integrability condition on the scale function, which gives an extension of Feller's classification of the boundary for birth-and-death processes.
	The existence and the set of quasi-stationary distributions are characterized by the scale function and the new classification of the boundary.
\end{abstract}


\section{Introduction}

Let us consider a continuous-time Markov chain $(\{X_{t}\}_{t \geq 0},\{\bP_{x}\}_{x \in \bN}) $ on $\bN := \{ 0,1,2,\cdots \}$ whose downward transitions are \textit{skip-free}, that is,
\begin{align}
	\bP_{z}[ \tau_{y} < \tau_{x}] = 1 \quad \text{for $x, y, z \in \bN$ with $x < y < z$}, \label{downwardSkipfree}
\end{align}
where $\tau_{A} := \inf \{ t > 0 \mid X_{t} \in A \} \ (A \subset \bN)$ denotes the first hitting time of the set $A$ and we write $\tau_{x} := \tau_{\{x\}} \ (x \in \bN)$.
We assume the following conditions:
\begin{align}
	\left\{
	\begin{aligned}
		\text{(i) } &\text{$X$ is irreducible on $\bN \setminus \{0\}$; $\bP_{x}[\tau_{y} < \infty] > 0 \ (x \in \bN \setminus \{0\},\ y \in \bN)$.} \\
		\text{(ii) } &\text{$X$ is regular on $\bN \setminus \{0\}$; $\bP_{x}[\tau_{\bN \setminus \{x\} } > 0] = 1 \ (x \in \bN \setminus \{0\})$.} \\
		\text{(iii) } &\text{The point $0$ is a trap; $ \bP_{0}[X_{t} = 0] = 1$. }
	\end{aligned}
	\right. \label{generalAssumptions}
\end{align}
We say that a probability distribution $\nu$ on $\bN$ is a \textit{quasi-stationary distribution} if it satisfies
\begin{align}
	\nu(y) = \bP_{\nu}[X_{t} = y \mid \tau_{0} > t] \quad \text{for every $y \geq 0$ and $t \geq 0$}, \nonumber
\end{align}
where we denote $\bP_{\nu} := \sum_{y \in \bN} \nu(y) \bP_{y}$.
In the present paper, we give a necessary and sufficient characterization for existence of a quasi-stationary distribution of $X$
and characterize the set of quasi-stationary distributions.
The precise statements will be given in Section \ref{section:mainResults}.

Downward skip-free Markov chains are a class of Markov chains that include several important applied stochastic models, such as birth-and-death processes,
left-continuous random walks and branching processes.
There have been detailed results for the specific class of processes.
For birth-and-death processes, van Doorn \cite{vanDoornQSD-BD} gave a complete characterization of quasi-stationary distributions, i.e., it gave a necessary and sufficient condition for the existence and showed the set of quasi-stationary distributions.
Collet, Mart\'inez and San Mart\'in \cite[Chapter 5]{Quasi-stationary_distributions} also gives a readable presentation on quasi-stationary distributions for birth-and-death processes.
For branching processes, after a pioneering work by Yaglom \cite{yaglomBranching}, Seneta and Vere-Jones \cite{SenetaVereJones} showed there are infinitely many quasi-stationary distributions for discrete-time subcritical branching processes and
Maillard \cite{branchingQSD} gave a complete characterization of quasi-stationary distributions for continuous-time branching processes.
We can refer to \cite[Section 3.3]{branchingQSD} for a history of the studies on quasi-stationary distributions for branching processes.
For general downward skip-free Markov chains, Kijima \cite{KijimaQSD} showed a partial result on existence of a quasi-stationary distribution, which is generalized in the present paper as we will explain in Remark \ref{rem:kijima}.

Our approach is based on the theory of \textit{scale functions}.
Scale functions are extensively used in the studies of birth-and-death processes, one-dimensional diffusions and spectrally one-sided L\'evy processes.
We will introduce a scale function for downward skip-free Markov chains following Noba \cite{NobaGeneralizedScaleFunc} which introduced a scale function for standard Markov processes on an interval without negative jumps.

Let us briefly recall the scale function for spectrally positive L\'evy processes since it provide a guide to our approach.
For details see Bertoin \cite[Chapter VII]{BertoinLevy} and Kyprianou \cite[Chapter 8]{KyprianouText}.
Suppose $Y$ is a spectrally positive L\'evy process, that is, a one-dimensional L\'evy process without negative jumps or monotone paths.
We denote the characteristic exponent by $\psi$:
\begin{align}
	\psi(\beta) := \log \bE_{0}^{Y} [\mathrm{e}^{-\beta Y_{1}}] \quad (\beta \geq 0), \nonumber
\end{align}
where $\bE^{Y}_{x}$ denotes the underlying probability measure of $Y$ starting from $x \in \bR$.
For $q \geq 0$, the $q$-scale function $W_{Y}^{(q)}(x) \ (x \in \bR)$ is given as the unique function which is strictly increasing and continuous on $[0,\infty)$ and satisfies
\begin{align}
	W^{(q)}_{Y}(x) = 0 \quad (x < 0) \quad \text{and} \quad \int_{0}^{\infty}\mathrm{e}^{-\beta x}W^{(q)}_{Y}(x)dx = \frac{1}{\psi(\beta) - q} \quad \text{for large $\beta > 0$}. \nonumber
\end{align}
It is well known that the scale function gives a simple representation for the exit time from an interval and the potential density killed on exiting an interval, that is, the following formulas hold (see e.g., \cite[Theorem 8.1, 8.7]{KyprianouText}): for $a > 0$
\begin{align}
	\bE_{x}[\mathrm{e}^{-q \tau_{0}}, \tau^{Y}_{0} < \tau^{Y}_{[a,\infty)}] = \frac{W_{Y}^{(q)}(a-x)}{W^{(q)}_{Y}(a)} \quad (x \in \bR) \label{exitProbSPL}
\end{align}
and
\begin{align}
	\int_{0}^{\infty} \mathrm{e}^{-qt}\bP_{x}[Y_{t} \in dy, \tau^{Y}_{0} \wedge \tau^{Y}_{[a,\infty)} > t]dt = u^{(q)}(x,y)dy \label{potentialDensitySPL2}
\end{align}
for
\begin{align}
	u^{(q)}(x,y) = \frac{W^{(q)}_{Y}(a-x) W^{(q)}_{Y}(y)}{W^{(q)}_{Y}(a)} - W^{(q)}_{Y}(y-x) \quad (x,y \in [0,a]), \label{potentialDensitySPL}
\end{align}
where $\tau_{A}^{Y} \ (A \subset \bR)$ denotes the first hitting time of the set $A$ for $Y$.

Bertoin \cite{BertoinQSD} has studied existence of a quasi-stationary distribution for spectrally one-sided L\'evy processes killed on exiting a finite interval $[0,a]$.
He showed that the function $q \mapsto W^{(q)}(x)$ can be analytically extended to the entire function for every $x \geq 0$
and proved, under the assumption of the absolute continuity of the transition probability, there exists a unique quasi-stationary distribution $\nu$, which is represented by a scale function (see \cite[Theorem 2]{BertoinQSD}):
\begin{align}
	\nu(dx) = C W^{(-\rho)}_{Y}(x)dx \quad (x \in [0,a]), \quad \bP_{\nu}[Y_{t} \in dx, \tau^{Y}_{0}\wedge\tau^{Y}_{[a,\infty)} > t] = \mathrm{e}^{-\rho t}\nu(dx), \nonumber
\end{align}
where $\rho := \inf \{ \lambda \geq 0 \mid W^{(-\lambda)}_{Y}(a) = 0 \}$ and $C > 0$ is the normalizing constant.
His proof was given by, roughly speaking, analytically extending the potential density formula \eqref{potentialDensitySPL} to $q \in \bC$ and then applying the $R$-theory for general Markov processes by Tuominen and Tweedie \cite{TuominenTweedie}, which gives a general criterion for existence of a unique quasi-stationary distribution.
In Yamato \cite{QSD_SPL}, existence of a quasi-stationary distribution for spectrally positive L\'evy processes killed on exiting the half line $[0,\infty)$ was studied and a necessary and sufficient condition for the existence is obtained.
Though the $R$-theory is not applicable in this case, the scale function also plays a fundamental role and quasi-stationary distributions are represented by scale functions.
In addition, if a quasi-stationary distribution exists, it necessarily follows that there exist infinitely many ones.
They are given by
\begin{align}
	\nu_{\lambda}(dx) := \lambda W^{(-\lambda)}_{Y}(x)dx, \quad \bP_{\nu_{\lambda}}[Y_{t} \in dx, \tau_{0} > t] = \mathrm{e}^{-\lambda t}\nu_{\lambda}(dx) \quad (\lambda \in (0,\lambda_{0}]), \nonumber
\end{align}
where $\lambda_{0} := \sup \{ \lambda \geq 0 \mid W^{(-\lambda)}_{Y}(x) > 0 \quad \text{for every $x > 0$} \}$ (see \cite[Theorem 1.1]{QSD_SPL}).
Noba \cite{NobaGeneralizedScaleFunc} has introduced the \textit{generalized $q$-scale function} $W^{(q)}(x,y) \ (q \geq 0, \ x,y \in I) $ for standard Markov processes on an interval $I$ without negative jumps by the excursion theory and shown that the formula \eqref{exitProbSPL}, \eqref{potentialDensitySPL2} and \eqref{potentialDensitySPL} are generalized to these processes.

Our approach is combining the methods of Bertoin \cite{BertoinQSD} and Noba \cite{NobaGeneralizedScaleFunc}, that is,
we introduce a generalized $q$-scale function $W^{(q)}(x,y) \ (q \geq 0, \ x,y \in \bN)$ for downward skip-free Markov chains in an analogous way to Noba \cite{NobaGeneralizedScaleFunc}.
Then following Bertoin \cite{BertoinQSD}, we show the function $q \mapsto W^{(q)}(x,y)$ can be extended to the entire function. Actually more strongly, we prove $W^{(q)}(x,y)$ is a polynomial of $q$ for fixed $x,y \in \bN$.
Then we investigate existence of a quasi-stationary distribution through the potential density formula corresponding to \eqref{potentialDensitySPL}.
Thanks to the scale function, we can use the similar method as in the case of the spectrally one-sided L\'evy processes in various aspects though in some areas the arguments are not so parallel due to the space-inhomogeneity.
Since existence of a quasi-stationary distribution implies the exponential integrability of $\tau_{0}$,
the recurrence of the process, or almost equivalently, the behavior of the process around $\infty$ is important.
To see this point, we classify the boundary $\infty$ as \textit{entrance} or \textit{non-entrance} through an integrability on the $0$-scale function, which is an extension of Feller's classification of the boundary for birth-and-death process as we will mention in Remark \ref{rem:classificationIsExtension}.

\subsection{Main results} \label{section:mainResults}

To state our main results, we prepare some notation.
Fix a continuous-time Markov chain $(\{ X_{t} \}_{t \geq 0},\{\bP_{x}\}_{x \in  \bN})$ on $\bN$
whose $Q$-matrix $Q := (Q(x,y))_{x,y \geq 0}$ is conservative, that is, $\sum_{y \in \bN} Q(x,y) = 0 \ (x \in \bN)$, and which is killed at the explosion time $\tau_{\infty} := \lim_{z \to \infty}\tau_{z}^{+}$, where $\tau_{z}^{+} := \tau_{[z,\infty) \cap \bN}$.
Set $Q(x) := - Q(x,x) = \sum_{y \neq x} Q(x,y) \ (x \in \bN)$.
We suppose $X$ satisfies \eqref{downwardSkipfree} and \eqref{generalAssumptions}.
Note that the downward skip-free property \eqref{downwardSkipfree} is characterized by
\begin{align}
	Q(x,y) = 0 \quad \text{for $0 \leq y \leq x-2$} \nonumber
\end{align}
and the property that $0$ is a trap is characterized by
\begin{align}
	Q(0,x) = 0 \quad (x \in \bN). \nonumber 	
\end{align}
Also note that from the conditions (i) and (ii) in \eqref{generalAssumptions}, it holds $Q(x) \in (0,\infty)$ for every $x \geq 1$.
For $x \geq 0$, define the \textit{local time} at $x$ by
\begin{align}
	L^{x}_{t} := \int_{0}^{t}1\{X_{s} = x\}ds. \nonumber
\end{align}
Then we have the occupation time formula and the potential density formula:
for every non-negative function $f: \bN \to [0,\infty]$ it holds
\begin{align}
	\int_{0}^{t}f(X_{s})ds = \sum_{x \geq 0} f(x)L^{x}_{t} \nonumber
\end{align}
and
\begin{align}
	\int_{0}^{\infty}\mathrm{e}^{-qt}\bE_{x}[f(X_{t})]dt  = \sum_{y \geq 0} f(y) \bE_{x}\left[ \int_{0}^{\infty}\mathrm{e}^{-qt}dL^{y}_{t} \right]. \nonumber
\end{align}
Let $\bD$ denote the set of c\`adl\`ag paths on $\bN$.
From now on, we assume the process $X$ is a canonical process on $(\bN^{[0,\infty)},\cB(\bN^{[0,\infty)}) )$ to avoid unnecessary technicalities.
We define the excursion measure $n_{x}(de)$ of $X$ away from $x > 0$ by
\begin{align}
	n_{x}[X \in de] := Q(x)\bP_{x}[ (X \circ \theta_{\tau_{\bN \setminus \{x\} }})_{\cdot \wedge \tau_{x}} \in de] \quad (e \in \bD), \nonumber
\end{align}
where $\theta$ denotes the shift operator.
Under this excursion measure, it is not difficult to check that the \textit{inverse local time} $\eta^{x}$ of $x$, the right-continuous inverse of $t \mapsto L^{x}_{t}$, satisfies
\begin{align}
	-\log \bE_{x}[\mathrm{e}^{-q\eta^{x}(1)}] = q + n_{x}[1 - \mathrm{e}^{-q \tau_{x}}]. \nonumber
\end{align}
Following Noba \cite{NobaGeneralizedScaleFunc}, we introduce the scale function.
For $q \geq 0$, define a function $W^{(q)}: \bN \times \bN \to [0,\infty)$ by
\begin{align}
	W^{(q)}(x,z) := \frac{1}{n_{z}[\mathrm{e}^{-q\tau_{x}}, \tau_{x} < \infty]} \quad \text{for } x < z
	\label{scaleFunc}
\end{align}
and
\begin{align}
	W^{(q)}(x,z) := 0 \quad \text{for } x \geq z. \label{scaleFunc-offDiagonal}
\end{align}
We especially write $W := W^{(0)}$.
We call $W^{(q)}$ the \textit{$q$-scale function}.
We will derive some basic formulas on the scale function in Section \ref{section:scaleFunc}.
Our first main result is to show the scale function $W^{(q)}(x,y) \ (x,y \geq 0, \ q \geq 0)$ is a polynomial of $q$, which enables us to analytically extend the domain of $q$ to $\bC$.
We say that a $\bN \times \bN$-matrix $M = (M(x,y))_{x,y \in \bN}$ is \textit{triangular} if $M(x,y) = 0$ for $x \geq y$.
The proof of Theorem \ref{thm:polynomialRepresentation} will be given in Section \ref{section:polynomialRepresentation}.

\begin{Thm} \label{thm:polynomialRepresentation}
	Let $W^{(q)} = (W^{(q)}(x,y))_{x,y \geq 0}$.
	For $q \geq 0$, the matrix $F = W^{(q)}$ is the unique triangular matrix solution of the following equation:
	\begin{align}
		Q F(x,y) &= I(x,y) + q F(x,y) \quad ( x \geq 1, y \geq 0), \label{eigenFunc}
	\end{align}
	where $I := (\delta_{xy})_{x,y \geq 0}$.
	It has a polynomial representation:
	\begin{align}
		W^{(q)}(x,y)  = \sum_{n \geq 0}q^{n} W^{n+1} (x,y) = \sum_{0 \leq n \leq y-x-1}q^{n} W^{n+1}(x,y) \quad (x,y \geq 0) \label{polynomialRepresentation}
	\end{align}
	(hence $q \mapsto W^{(q)}(x,y)$ is an entire function), where $W^{n}$ denotes the $n$-th product of the matrix $W = W^{(0)}$.
	In addition, for $q \in \bC$, the matrix $I - qW$ is invertible and it holds
	\begin{align}
		W^{(q)} = (I - qW)^{-1}W = W (I - qW)^{-1}. \label{q-scaleRepresentation}
	\end{align}
\end{Thm}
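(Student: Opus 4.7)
The plan is to organize the proof in three steps: derive the eigenfunction equation \eqref{eigenFunc} for $\bW^{(q)}$ using strong Markov under the excursion measure $n_z$ together with the backward equation for $\bE_\cdot[\mathrm{e}^{-q\tau_0}]$; prove uniqueness of triangular solutions by a downward induction exploiting the downward skip-free structure; and construct the polynomial series $\sum_{n \geq 0}q^{n}\bW^{n+1}$ as a Neumann-type solution, from which both the representation \eqref{polynomialRepresentation} and the invertibility in \eqref{q-scaleRepresentation} follow.

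For the eigenfunction equation, the key input is that downward skip-free forces $X_{\tau_b}=b$ on $\{\tau_b<\infty\}$ and compels any $n_z$-excursion reaching $a<b$ to first visit $b$, so strong Markov at $\tau_b$ yields
\begin{equation*}
n_z[\mathrm{e}^{-q\tau_a},\tau_a<\infty]=n_z[\mathrm{e}^{-q\tau_b},\tau_b<\infty]\,\bE_b[\mathrm{e}^{-q\tau_a},\tau_a<\infty]\quad (0\le a<b<z).
\end{equation*}
Setting $a=0$ and using \eqref{scaleFunc} gives $\bE_b[\mathrm{e}^{-q\tau_0}]=W^{(q)}(b,z)/W^{(q)}(0,z)$ for $1\le b<z$, and then the standard backward equation $(qI-Q)\bE_\cdot[\mathrm{e}^{-q\tau_0}]=0$ at $b\ge 1$ produces \eqref{eigenFunc} in the off-diagonal range $x<y$. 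The diagonal case $x=y$ follows from the representation $n_x[\,\cdot\,]=\sum_{w\neq x}Q(x,w)\bE_w[\,\cdot\wedge\tau_x\,]$ together with the observation that, by downward skip-free, only the branch starting at $w=x-1$ can reach $x-1$ before $\tau_x$; hence $n_x[\mathrm{e}^{-q\tau_{x-1}},\tau_{x-1}<\infty]=Q(x,x-1)$ and $W^{(q)}(x-1,x)=1/Q(x,x-1)$. The case $x>y$ is automatic.

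Uniqueness is proved by a downward induction on $x$ at fixed $y$: row $y$ of \eqref{eigenFunc} combined with triangularity forces $F(y-1,y)=1/Q(y,y-1)$, and given $F(w,y)$ for $w\ge x+1$, row $x+1$ of \eqref{eigenFunc} reduces (thanks to $F(w,y)=0$ for $w\ge y$) to the finite relation $Q(x+1,x)F(x,y)=(\textrm{known})$ with $Q(x+1,x)>0$, determining $F(x,y)$. For the polynomial representation, strict triangularity of $\bW$ gives $\bW^{n+1}(x,y)=0$ once $n\ge y-x$, so $\bF:=\sum_{n\ge 0}q^{n}\bW^{n+1}$ collapses to a polynomial at each entry; using the $q=0$ case $Q\bW=\bI$ on rows $\ge 1$ one computes $Q\bF=\sum_{n\ge 0}q^{n}\bW^{n}=\bI+q\bF$ on rows $\ge 1$, whence $\bF=\bW^{(q)}$ by uniqueness and \eqref{polynomialRepresentation} follows. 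The telescoping identity $(\bI-q\bW)\sum_{n\ge 0}q^{n}\bW^{n}=\bI$ then holds entrywise for every $q\in\bC$, giving the invertibility of $\bI-q\bW$ and the formulas \eqref{q-scaleRepresentation}. The main technical obstacle is the rigorous implementation of the strong Markov step under $n_z$; once the identification $X_{\tau_b}=b$ on $\{\tau_b<\infty\}$ is secured from the skip-free property, the remaining arguments are purely algebraic, every relevant sum being finite by triangularity.
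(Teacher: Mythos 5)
Your overall route---derive the eigen-equation for $\bW^{(q)}$ from the excursion/first-jump structure, prove uniqueness of triangular solutions by induction using $Q(x,x-1)>0$, and identify the Neumann series $\sum_{n\ge 0}q^{n}\bW^{n+1}$ as that unique solution---is essentially the paper's proof. The diagonal computation $W^{(q)}(x-1,x)=1/Q(x,x-1)$, the uniqueness induction (yours runs downward in $x$ at fixed $y$, the paper's runs upward in the off-diagonal distance $y-x$ over all $x$ at once, but these are the same argument), and the final algebra giving \eqref{polynomialRepresentation} and \eqref{q-scaleRepresentation} via entrywise-finite sums all match.

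The one genuine flaw is in your strong Markov step. An $n_z$-excursion is killed at its return to $z$, so the factorization must read $n_z[\mathrm{e}^{-q\tau_a},\tau_a<\infty]=n_z[\mathrm{e}^{-q\tau_b},\tau_b<\infty]\,\bE_b[\mathrm{e}^{-q\tau_a},\tau_a<\tau_z^{+}]$; dropping the restriction $\tau_a<\tau_z^{+}$ yields a false identity, since $\bE_b[\mathrm{e}^{-q\tau_0},\tau_0<\infty]=g^{(q)}(b)$ does not depend on $z$ whereas $W^{(q)}(b,z)/W^{(q)}(0,z)$ does (they agree only in the limit $z\to\infty$). This is not merely cosmetic for your next step: the backward equation must be applied to the killed quantity $f(b):=\bE_b[\mathrm{e}^{-q\tau_0},\tau_0<\tau_z^{+}]$ with boundary values $f(0)=1$ and $f(w)=0$ for $w\ge z$, because it is exactly this vanishing on $[z,\infty)$, matching $W^{(q)}(w,z)=0$ for $w\ge z$, that converts $Qf=qf$ on $\{1,\dots,z-1\}$ into the matrix identity $Q\bW^{(q)}(x,z)=qW^{(q)}(x,z)$. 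With the unrestricted $g^{(q)}$ the backward equation is a different (and here useless) statement. The corrected factorization is precisely Proposition \ref{prop:exitProblem1}; once it is in place, the rest of your argument goes through verbatim.
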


We go on to state our results on existence of a quasi-stationary distribution.
Define the \textit{decay parameter}
\begin{align}
	\lambda_{0} := \sup \{ \lambda \geq 0 \mid \bE_{x}[\mathrm{e}^{\lambda \tau_{0}}] < \infty \quad \text{for some $x \geq 1$} \}. \label{lambdaZero}
\end{align}
We remark that from \cite[Theorem 1]{KingmanRrecurrence}, it always holds $\lambda_{0} < \infty$
and $\bE_{x}[\mathrm{e}^{(\lambda_{0} - \eps)\tau_{0}}] < \infty$ for every $x \geq 1$ and $\eps > 0$.
We also remark that if $\nu$ is a quasi-stationary distribution, the distribution $\bP_{\nu}[\tau_{0} \in dt]$ is exponentially distributed, which can be easily seen from the Markov property.
Thus, the positivity of $\lambda_{0}$ is a necessary condition for existence of a quasi-stationary distribution. 
We introduce a \textit{classification of the boundary} $\infty$ by the integrability of the function $W(0,\cdot)$.
We say that the boundary $\infty$ is \textit{entrance} if it holds
\begin{align}
	\sum_{y \geq 0} W(0,y) < \infty, \label{entrance}
\end{align}
and we say the boundary $\infty$ is \textit{non-entrance} if it holds
\begin{align}
	\sum_{y \geq 0} W(0,y) = \infty. \label{natural}
\end{align}
The following theorem characterizes existence of a quasi-stationary distribution and determines the set of quasi-stationary distributions.
The positivity and integrability of the function $W^{(-\lambda)}(0,\cdot) \ (\lambda \in (0,\lambda_{0}])$ will be shown in \eqref{eq54} and Corollary \ref{cor:ScaleDefinesFiniteMeasure}.
The proof of Theorem \ref{thm:QSDExistenceCharacterization} will be given in Section \ref{section:QSD}.

\begin{Thm} \label{thm:QSDExistenceCharacterization}
	Assume $X$ certainly hits $0$:
	\begin{align}
		\bP_{x}[\tau_{0} < \infty] = 1 \quad (x \geq 0). \label{nonExplosive}
	\end{align}
	Let $\cQ$ denote the set of quasi-stationary distributions.
	Then the following holds depending on the classification of the boundary $\infty$:
	\begin{enumerate}
		\item If the boundary $\infty$ is entrance, it holds $\lambda_{0} > 0$ and $\cQ = \{ \nu_{\lambda_{0}} \}$.
		\item If the boundary $\infty$ is non-entrance and $\lambda_{0} > 0$, it holds $\cQ = \{ \nu_{\lambda} \}_{\lambda \in (0,\lambda_{0}]}$.
	\end{enumerate}
	Here for $\lambda \in (0,\lambda_{0}]$, the probability distribution $\nu_{\lambda}$ is
	\begin{align}
		\nu_{\lambda}(x) := \lambda W^{(-\lambda)}(0,x) \quad (x \geq 0). \nonumber
	\end{align}
\end{Thm}

\begin{Rem} \label{rem:classificationIsExtension}
	Our new classification of the boundary defined in \eqref{entrance} and \eqref{natural} is an extension of Feller's for birth-and-death processes (see e.g., \cite[Chapter 8]{Anderson}).
	Let us consider a birth-and-death process whose $Q$-matrix $Q = (Q(x,y))_{x,y \in \bN}$ is given by
	\begin{align}
		Q(x,y) = \left\{
			\begin{aligned}
				&\mu(x) & (y = x-1), \\
				&-(\mu(x) + \lambda(x)) & (y = x), \\
				&\lambda(x) & (y = x+1), \\
				&0 & (\text{otherwise})
			\end{aligned}
		\right. \nonumber
	\end{align} 
	for $\lambda(x), \mu(x) > 0 \ (x > 0)$ and $\mu(0) = \lambda(0) = 0$.
	Define the \textit{speed measure} $\pi = (\pi(x))_{x \geq 1}$ by
	\begin{align}
		\pi(1) := 1, \quad \pi(x) := \frac{\lambda(1)\lambda(2) \cdots \lambda(x-1)}{\mu(2)\mu(3) \cdots \mu(x)} \quad  (x \geq 2) \nonumber
	\end{align}
	and the (usual) scale function $s: \bN \to [0,\infty)$ by
	\begin{align}
		s(0) = 0, \quad s(x) = \frac{1}{\mu(1)} + \sum_{1 \leq y \leq x-1} \frac{1}{\pi(y) \lambda(y)} \quad (x \geq 1). \nonumber
	\end{align}
	Then the $0$-scale function $W = W^{(0)}$ satisfies
	\begin{align}
		W(0,x) = s(x)\pi(x) \quad (x \geq 0). \nonumber
	\end{align}
	For this process, the entrance condition \eqref{entrance} is equivalent to
	\begin{align}
		\sum_{x > 0}s(x)\pi(x) < \infty. \label{eq20}
	\end{align}
	Note that the boundary $\infty$ is entrance or non-entrance in the sense of Feller's classification \cite[p.262]{Anderson} according as the LHS of \eqref{eq20} is finite or infinite.
\end{Rem}

\begin{Rem} \label{rem:kijima}
	We mention a previous study by Kijima \cite{KijimaQSD} for existence of a quasi-stationary distribution for downward skip-free Markov chains.
	His result \cite[Theorem 3.3]{KijimaQSD} shows existence of infinitely many quasi-stationary distributions under the assumptions $\lambda_{0} > 0$, the process being uniformizable, i.e., $\sup_{x > 0}Q(x) < \infty$, and some technical conditions.
	Theorem \ref{thm:QSDExistenceCharacterization} generalizes his result since the uniformizability implies the non-entrance condition.
	Indeed, if the process is uniformizable, from the definition of the scale function, it holds for $x > 0$
	\begin{align}
		W(0,x) &= \frac{1}{n_{x}[\tau_{0}  <\infty]} \geq \frac{1}{ \sup_{y > 0}Q(y)} > 0, \nonumber
	\end{align}
	and it obviously implies $\sum_{x > 0}W(0,x) = \infty$.
\end{Rem}

\begin{Rem}
	From the computation in Remark \ref{rem:kijima}, we see
	\begin{align}
		\sup_{y > 0} \frac{Q(y)}{y} < \infty \nonumber
	\end{align}
	is a sufficient condition for the non-entrance condition.
	The condition holds, for example, when $X$ is a random walk ($Q(x) = c \ (x \geq 1)$ for some $c > 0$) or a branching process ($Q(x) = cx \ (x \geq 1)$ for some $c > 0$). 
\end{Rem}

From Theorem \ref{thm:QSDExistenceCharacterization}, we see the following.

\begin{Cor}
	Suppose \eqref{nonExplosive} holds.
	Then a quasi-stationary distribution exists if and only if $\lambda_{0} > 0$.
\end{Cor}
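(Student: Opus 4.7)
The plan is to read off this corollary directly from Theorem \ref{thm:QSDExistenceCharacterization}, so there is essentially no new content to prove; the two implications each follow from a short argument.

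For the ``if'' direction, assume $\lambda_{0} > 0$. The boundary $\infty$ is either entrance or natural, and in either case Theorem \ref{thm:QSDExistenceCharacterization} exhibits at least one quasi-stationary distribution: the case (i) gives $\nu_{\lambda_{0}} \in \cQ$ when $\infty$ is entrance (and (i) also guarantees $\lambda_{0} > 0$ automatically in that case), while (ii) gives the one-parameter family $\{\nu_{\lambda}\}_{\lambda \in (0,\lambda_{0}]} \subset \cQ$ when $\infty$ is natural and $\lambda_{0} > 0$. Either way $\cQ \neq \emptyset$.

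For the ``only if'' direction, suppose $\nu \in \cQ$. The standard observation (already noted in the excerpt just before the theorem) is that the Markov property together with the defining quasi-stationarity identity forces $\bP_{\nu}[\tau_{0} > t]$ to satisfy the multiplicative equation $\bP_{\nu}[\tau_{0} > t+s] = \bP_{\nu}[\tau_{0} > t]\bP_{\nu}[\tau_{0} > s]$, so $\tau_{0}$ is exponentially distributed under $\bP_{\nu}$ with some parameter $\lambda \geq 0$. Because $\nu$ is supported on $\bN \setminus \{0\}$ and $\bP_{x}[\tau_{0} < \infty] = 1$ for every $x \geq 1$, we have $\bP_{\nu}[\tau_{0} < \infty] = 1$, which rules out $\lambda = 0$, so $\lambda > 0$. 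Then for any $\lambda' \in (0,\lambda)$ one has $\bE_{\nu}[\mathrm{e}^{\lambda' \tau_{0}}] = \lambda/(\lambda - \lambda') < \infty$. Writing this out as $\sum_{x \geq 1} \nu(x) \bE_{x}[\mathrm{e}^{\lambda' \tau_{0}}] < \infty$ and picking any $x$ with $\nu(x) > 0$ yields $\bE_{x}[\mathrm{e}^{\lambda' \tau_{0}}] < \infty$, whence $\lambda_{0} \geq \lambda' > 0$.

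There is no serious obstacle here; the only small point to watch is excluding $\lambda = 0$ in the exponential-distribution step, and this is taken care of by the standing assumption $\bP_{x}[\tau_{0} < \infty] = 1$ from \eqref{assumptions}. All the real work has already been done in proving Theorem \ref{thm:QSDExistenceCharacterization}.
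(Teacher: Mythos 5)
Your proposal is correct and follows essentially the same route as the paper: the ``if'' direction is read off from the two cases of Theorem \ref{thm:QSDExistenceCharacterization}, and the ``only if'' direction rests on the observation (stated in the paper just before the definition of entrance/natural) that quasi-stationarity forces $\tau_{0}$ to be exponentially distributed under $\bP_{\nu}$ with a strictly positive rate, whence $\lambda_{0}>0$. Your explicit handling of the $\lambda=0$ exclusion via $\bP_{x}[\tau_{0}<\infty]=1$ and the passage from $\bE_{\nu}[\mathrm{e}^{\lambda'\tau_{0}}]<\infty$ to $\bE_{x}[\mathrm{e}^{\lambda'\tau_{0}}]<\infty$ for some $x$ with $\nu(x)>0$ is a correct filling-in of details the paper leaves implicit.
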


Under the entrance condition, we may further show the unique quasi-stationary distribution is the \textit{Yaglom limit}.

\begin{Thm} \label{thm:YaglomSimplified}
	Suppose \eqref{nonExplosive} holds and the boundary $\infty$ is entrance.
	Then the quasi-stationary distribution $\nu_{\lambda_{0}}$ is the Yaglom limit:
	\begin{align}
		\lim_{t \to \infty} \bP_{x}[X_{t} = y \mid \tau_{0} > t] = \nu_{\lambda_{0}}(y) \quad (x,y > 0). \nonumber
	\end{align}
\end{Thm}

Theorem \ref{thm:YaglomSimplified} will be proven in Section \ref{section:YaglomLimit} in more detailed form as Theorem \ref{thm:YaglomLimit}.

The set of quasi-stationary distributions is totally ordered by a stochastic order.
For probability distributions $\mu,\nu$ on $\bN$,
 we say $\mu$ is smaller than $\nu$ in the \textit{likelihood ratio order} and denote by $\mu \leq_{\mathrm{lr}} \nu$ when
\begin{align}
	\mu(x)\nu(x+1) \geq \mu(x+1)\nu(x) \quad \text{for every $x \geq 0$}. \nonumber
\end{align}
It is worth noting that the order $\leq_{\mathrm{lr}}$ is stronger than the usual stochastic order, that is, $\mu \leq_{\mathrm{lr}} \nu$ implies $\sum_{y \geq x} \mu(y) \leq \sum_{y \geq x}\nu(y)$ for every $x \geq 0$ (see e.g., \cite[Theorem 1.C.1]{StochasticOrder}).
We remark that the similar result with the following theorem was given in \cite[Theorem 3.4]{KijimaQSD}.

\begin{Thm} \label{thm:stochasticOrdering}
	Suppose \eqref{nonExplosive} holds, $\lambda_{0} > 0$ and the boundary $\infty$ is non-entrance.
	Then it holds
	\begin{align}
		\nu_{\lambda} \leq_{\mathrm{lr}} \nu_{\lambda'} \quad (0 < \lambda' \leq \lambda \leq \lambda_{0}). \nonumber
	\end{align}
	In particular, the set of quasi-stationary distributions is totally ordered by the likelihood ratio order and the distribution $\nu_{\lambda_{0}}$ is the minimum element.
\end{Thm}

We may characterize the value $\lambda_{0}$ by a positivity of the scale function.

\begin{Thm} \label{thm:spectralBottomCharacterization}
	Suppose \eqref{nonExplosive} holds.
	Then it holds
	\begin{align}
		\lambda_{0} = \max \{ \lambda \geq 0 \mid W^{(-\lambda)}(0,x) > 0 \ \text{for every $x > 0$} \}. \label{eq42}
	\end{align}
\end{Thm}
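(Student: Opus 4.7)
I would prove the theorem by establishing the two inequalities $\lambda_0 \leq \lambda_{\ast}$ and $\lambda_{\ast} \leq \lambda_0$ separately, where $\lambda_{\ast}$ denotes the right-hand side of \eqref{eq42}.

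The inequality $\lambda_0 \leq \lambda_{\ast}$ is already embedded in what is needed for Theorem \ref{thm:QSDExistenceCharacterization}: Lemma \ref{lem:scaleFuncGivesQSD-entrance} (together with its natural-boundary counterpart used in Section \ref{section:QSD}) provides $W^{(-\lambda)}(0,x)>0$ for every $x>0$ and every $\lambda\in(0,\lambda_0]$, which is exactly $\lambda_{\ast}\geq\lambda_0$.

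For the reverse inequality $\lambda_{\ast}\leq\lambda_0$, I would fix an arbitrary $\lambda\in(0,\lambda_{\ast})$ and set $h(x) := W^{(-\lambda)}(0,x)$. Triangularity \eqref{scaleFunc-offDiagonal} gives $h(0)=0$, the definition of $\lambda_{\ast}$ forces $h>0$ on $\bN\setminus\{0\}$, and setting $q=-\lambda$ in \eqref{eigenFunc} yields $(Q+\lambda)h(x)=0$ for $x\geq 1$; thus $h$ is a strictly positive $\lambda$-harmonic function for the generator of $X$ killed at $0$. A Dynkin-type calculation, localized at $\sigma_n := \inf\{t : X_t \geq n\}$ (which tends to $\infty$ by the inaccessibility \eqref{inaccessible}), shows that
\[
M_t := \mathrm{e}^{\lambda t}\, h(X_{t\wedge\tau_0}), \qquad t\geq 0,
\]
is a nonnegative local martingale under $\bP_x$, hence a supermartingale. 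The supermartingale inequality $\bE_x[M_t]\leq h(x)$, together with $h(0)=0$, gives
\[
h(x) \geq \bE_x[\mathrm{e}^{\lambda t}\, h(X_t);\, t<\tau_0] \geq h(y)\,\mathrm{e}^{\lambda t}\,\bP_x[X_t=y,\,t<\tau_0] \quad (x,y>0),
\]
and specializing to $x=y$ produces the uniform on-diagonal bound $\bP_y[X_t=y,\,t<\tau_0]\leq\mathrm{e}^{-\lambda t}$ valid for every $t\geq 0$. By the $R$-theoretic identification of the on-diagonal and total-survival exponential decay rates for an irreducible chain with absorption (Kingman \cite{KingmanRrecurrence}), both rates equal $\lambda_0$, and the above bound therefore forces $\lambda\leq\lambda_0$. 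Letting $\lambda\uparrow\lambda_{\ast}$ yields $\lambda_{\ast}\leq\lambda_0$.

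The main obstacle is the very last step, namely passing from the pointwise bound $\bP_y[X_t=y,\,t<\tau_0]\leq\mathrm{e}^{-\lambda t}$ to $\lambda\leq\lambda_0$; this rests on the classical $R$-theoretic equality of on-diagonal and survival exponential decay rates under irreducibility, which I would invoke by citation rather than reprove. The martingale piece is otherwise routine: the generator identity $(\partial_t+Q)M=0$ gives the local-martingale property after localization at $\sigma_n$, nonnegativity of $h$ upgrades this to the supermartingale inequality, and the extraction of the on-diagonal bound uses nothing beyond restricting the event $\{t<\tau_0\}$ to $\{X_t=y\}$.
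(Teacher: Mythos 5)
Your first inequality ($\lambda_0 \le \lambda_\ast$) is fine and is exactly how the paper gets it, via the positivity of $W^{(-\lambda)}(0,\cdot)$ for $\lambda\le\lambda_0$ and Corollary \ref{cor:non-negativityOfScale-entrance}. The reverse inequality, however, rests on a genuine error: the function $h(x):=W^{(-\lambda)}(0,x)$ is \emph{not} a $\lambda$-harmonic function for the killed generator. Equation \eqref{eigenFunc} reads $Q\bF(x,y)=\bI(x,y)+q\bF(x,y)$ with $Q$ acting on the \emph{first} index, i.e.\ $\sum_{z}Q(x,z)W^{(q)}(z,y)=\delta_{xy}+qW^{(q)}(x,y)$; the objects it turns into (generalized) right eigenfunctions are the columns $x\mapsto W^{(q)}(x,y)$, which vanish for $x\ge y$ and hence are never strictly positive. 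The row $x\mapsto W^{(-\lambda)}(0,x)$ that you use is a \emph{left} eigenvector (an eigenmeasure --- it is precisely the candidate quasi-stationary density), and in general $\sum_{y}Q(x,y)W^{(-\lambda)}(0,y)\neq-\lambda W^{(-\lambda)}(0,x)$. A birth-and-death check using Remark \ref{rem:classificationIsExtension} makes this concrete: with $\lambda(x)\equiv 2$, $\mu(x)\equiv 1$ one has $W(0,x)=s(x)\pi(x)=2^{x}-1$, which satisfies $\sum_{x}W(0,x)Q(x,y)=0$ but $\sum_{y}Q(x,y)W(0,y)=3\cdot 2^{x-1}\neq 0$. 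Consequently $M_t=\mathrm{e}^{\lambda t}h(X_{t\wedge\tau_0})$ is not a local martingale and the supermartingale bound, hence the on-diagonal decay estimate, does not follow.

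Repairing this by switching to the left-eigenvector picture is not routine: you would then need to show that positivity and summability of the eigenmeasure $W^{(-\lambda)}(0,\cdot)$ force it to be an honest quasi-stationary distribution with survival rate $\mathrm{e}^{-\lambda t}$, and Lemma \ref{lem:scaleFuncGivesQSD-entrance} shows this is governed by the vanishing of the defect $h^{(q)}(\lambda;0)$ --- in the entrance case $\nu_\lambda$ is positive and summable for every $\lambda\in(0,\lambda_0)$ and yet is \emph{not} a QSD, so the implication is genuinely false without further input. There is also a secondary gap at the very end: even granting the on-diagonal bound, it yields $\lambda\le\lambda_c$ for the $R$-theoretic decay parameter $\lambda_c$ of the killed chain, and only the inequality $\lambda_0\le\lambda_c$ comes for free when $\lambda_0$ is defined through exponential moments of $\tau_0$; the converse identification would itself need proof. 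The paper's actual route is different: for $\lambda$ strictly between $\lambda_0$ and the right-hand side of \eqref{eq42} it extends the identity \eqref{h-funcRepresentation} to get $0\le 1-(\lambda+q)\sum_{y>0}g^{(q)}(y)W^{(-\lambda)}(0,y)<1$, and then derives a contradiction separately in the two boundary regimes: in the natural case this produces a QSD with rate $\lambda>\lambda_0$, which is impossible, while in the entrance case letting $q\downarrow-\lambda_0$ sends the right-hand side to $-\infty$ by $\lambda_0$-recurrence (Lemma \ref{lem:r-recurrence}), contradicting non-negativity. You would need to supply comparable input to close your argument.
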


The proof of Theorems \ref{thm:stochasticOrdering} and \ref{thm:spectralBottomCharacterization} will be given in Section \ref{section:QSD}.

\subsection*{Outline of the paper}

In Section \ref{section:scaleFunc}, we will derive some basic properties of the scale function.
In Section \ref{section:polynomialRepresentation}, we will prove Theorem \ref{thm:polynomialRepresentation}.
In Section \ref{section:QSD}, we will study existence of a quasi-stationary distribution.
In Section \ref{section:YaglomLimit}, we will consider the Yaglom limit under the entrance condition.
In Appendix, we will see that under the entrance condition downward skip-free Markov chains are extended to Feller processes on $\bN \cup \{\infty\}$.

\subsection*{Acknowledgements}

The author would like to thank Kouji Yano who read an early draft of the present paper and gave him valuable comments. This work was supported by JSPS KAKENHI Grant
Number JP21J11000, JSPS Open Partnership Joint Research Projects Grant Number
JPJSBP120209921 and the Research Institute for Mathematical Sciences, an International Joint Usage/Research Center located in Kyoto University.

\section{Scale function and potential density} \label{section:scaleFunc}

Following Noba \cite{NobaGeneralizedScaleFunc}, we show the exit time from an interval and the potential density killed on exiting an interval are represented by the scale function.
Although most of the results in this section are given in \cite{NobaGeneralizedScaleFunc} for standard Markov processes on an interval without negative jumps, and by almost the same argument we can show the corresponding results for downward skip-free Markov chains, we prove some of them for completeness.

\begin{Prop} \label{prop:exitProblem1}
	Let $q \geq 0$.
	For $x,y,z \in \bN$ with $x < y < z $,
	it holds 
	\begin{align}
		\bE_{y}[\mathrm{e}^{-q \tau_{x}}, \tau_{x} < \tau_{z}^{+}]
		= \frac{ n_{z}[\mathrm{e}^{-q \tau_{x}}, \tau_{x} < \infty ] }{ n_{z}[\mathrm{e}^{-q \tau_{y}}, \tau_{y} < \infty ] } = \frac{W^{(q)}(y,z)}{W^{(q)}(x,z)}. \nonumber
	\end{align}
\end{Prop}

\begin{proof}
	By the downward skip-free property and the strong Markov property, it holds for $0 \leq x < y < z$
	\begin{align}
		n_{z}[\mathrm{e}^{-q \tau_{x}}, \tau_{x} < \infty] 
		= n_{z}[\mathrm{e}^{-q \tau_{y}}, \tau_{y} < \infty ]\cdot \bE_{y}[\mathrm{e}^{-q \tau_{x}}, \tau_{x} < \tau_{z}^{+} ] \nonumber.
	\end{align}
\end{proof}

From Proposition \ref{prop:exitProblem1}, we see the scale function derives a martingale.

\begin{Prop}
	For $q \geq 0$ and $0 \leq x < y < z$, the process
	\begin{align}
		Z_{t} := \mathrm{e}^{-qt \wedge \tau_{x} \wedge \tau_{z}^{+}} W^{(q)}(X_{t \wedge \tau_{x} \wedge \tau_{z}^{+}},z) \quad (t \geq 0) \nonumber
	\end{align}
	is a martingale under $\bP_{y}$.
\end{Prop}

\begin{proof}	
	From Proposition \ref{prop:exitProblem1} and the Markov property it holds for $\tau := \tau_{x}\wedge \tau_{z}^{+}$
	\begin{align}
		\bE_{y}[Z_{t} ] &= W^{(q)}(x,z) \bE_{y}[ \mathrm{e}^{-q(t \wedge \tau)} \bE_{{X_{t \wedge \tau}}}[\mathrm{e}^{-q\tau_{x}}, \tau_{x} < \tau_{z}^{+}]] \nonumber  \\
		&= W^{(q)}(x,z) (\bE_{y}[\mathrm{e}^{-q\tau_{x}},\tau_{x} < \tau_{z}^{+}, \tau \leq t] + \bE_{y}[\mathrm{e}^{-q\tau_{x}}, \tau_{x} < \tau_{z}^{+}, \tau > t]) \nonumber \\
		&= W^{(q)}(y,z), \nonumber
	\end{align}
	and similarly we see for $\cF_{u} := \sigma (X_{v},0 \leq v \leq u) \ (u \geq 0)$ and $0 < s < t$
	\begin{align}
		\bE_{y}[Z_{t} \mid \cF_{s}] &= W^{(q)}(x,z) \bE_{y}[\mathrm{e}^{-q(t \wedge \tau)} \bE_{X_{t \wedge \tau}}[\mathrm{e}^{-q\tau_{x}}, \tau_{x} < \tau_{z}^{+}]  \mid \cF_{s}] \nonumber \\
		&= \mathrm{e}^{-q(t \wedge \tau)}W^{(q)}(x,z)  1\{ \tau \leq s \} + \mathrm{e}^{-qs} \bE_{X_{s}}[ Z_{t-s}] 1\{\tau > s\} \nonumber \\
		&= Z_{s} 1\{ \tau \leq s \} + \mathrm{e}^{-qs} W^{(q)}(X_{s},z) 1\{\tau > s\} \nonumber \\
		&= Z_{s}. \nonumber
	\end{align}
\end{proof}

By the same argument as \cite[Lemma 3.5]{NobaGeneralizedScaleFunc},
we see the following.

\begin{Prop} \label{prop:exitProblem2}
	Let $q \geq 0$.
	For $0 \leq x < y < z$ it holds
	\begin{align}
		\bE_{y}[\mathrm{e}^{-q \tau_{x}}, \tau_{x} < \tau_{z}^{+}] = n_{y}[\mathrm{e}^{-q\tau_{x}}, \tau_{x} < \infty ] \cdot \bE_{y}\left[ \int_{0}^{\tau_{x} \wedge \tau_{z}^{+}} \mathrm{e}^{-qt} dL^{y}_{t} \right]. \nonumber
	\end{align}
\end{Prop}


Combining the above results, we obtain the representation of the occupation density on an interval by the scale function.
\begin{Thm} \label{thm:potentialDensity}
	Let $q \geq 0$.
	For $0 \leq x < u,y < z$, it holds
	\begin{align}
		\bE_{y}\left[ \int_{0}^{\tau_{x} \wedge \tau_{z}^{+}} \mathrm{e}^{-qt} dL^{u}_{t} \right]
		= &\frac{W^{(q)}(x,u)W^{(q)}(y,z)}{W^{(q)}(x,z)} - W^{(q)}(y,u) \nonumber \\
		= &W^{(q)}(x,u) \bE_{y}[\mathrm{e}^{-q\tau_{x}}, \tau_{x} < \tau_{z}^{+}] - W^{(q)}(y,u). \label{potentialDensity}
	\end{align}
\end{Thm}

\begin{proof}
	The case $u = y$ follows from Propositions \ref{prop:exitProblem1} and \ref{prop:exitProblem2}.
	When $u \neq y$, it holds
	\begin{align}
		\bE_{y}\left[ \int_{0}^{\tau_{x} \wedge \tau_{z}^{+}} \mathrm{e}^{-qt} dL^{u}_{t} \right] &=
		\bE_{y}\left[ \int_{0}^{\tau_{x} \wedge \tau_{z}^{+}} \mathrm{e}^{-qt} dL^{u}_{t}, \tau_{u} < \tau_{x} \wedge \tau_{z}^{+} \right] \nonumber \\
		&= \bE_{y}[\mathrm{e}^{-q\tau_{u}}, \tau_{u} < \tau_{x} \wedge \tau_{z}^{+}]\cdot \bE_{u}\left[ \int_{0}^{\tau_{x} \wedge \tau_{z}^{+}}\mathrm{e}^{-qt}dL^{u}_{t} \right] \nonumber \\
		&= \bE_{y}[\mathrm{e}^{-q\tau_{u}}, \tau_{u} < \tau_{x} \wedge \tau_{z}^{+}] \cdot \frac{W^{(q)}(x,u)W^{(q)}(u,z)}{W^{(q)}(x,z)}. \label{eq06}
	\end{align}
	Thus, we focus on the first term of \eqref{eq06}.
	If $y > u$, it follows from Proposition \ref{prop:exitProblem1} that
	\begin{align}
		\bE_{y}[\mathrm{e}^{-q\tau_{u}}, \tau_{u} < \tau_{x} \wedge \tau_{z}^{+}] 
		= \bE_{y}[\mathrm{e}^{-q\tau_{u}}, \tau_{u} < \tau_{z}^{+}] = \frac{W^{(q)}(y,z)}{W^{(q)}(u,z)}. \nonumber
	\end{align}
	If $y < u$, it follows from the downward skip-free property
	\begin{align}
		\bE_{y}[\mathrm{e}^{-q\tau_{u}}, \tau_{u} < \tau_{x} \wedge \tau_{z}^{+}] \nonumber
		= &\frac{\bE_{y}[\mathrm{e}^{-q\tau_{x}}, \tau_{u} < \tau_{x} < \tau_{z}^{+}]}{\bE_{u}[\mathrm{e}^{-q\tau_{x}}, \tau_{x} < \tau_{z}^{+}]} \nonumber \\
		= &\frac{\bE_{y}[\mathrm{e}^{-q\tau_{x}}, \tau_{u}^{+} < \tau_{x} < \tau_{z}^{+}]}{\bE_{u}[\mathrm{e}^{-q\tau_{x}}, \tau_{x} < \tau_{z}^{+}]} \nonumber \\
		= &\frac{\bE_{y}[\mathrm{e}^{-q\tau_{x}}, \tau_{x} < \tau_{z}^{+}] - \bE_{y}[\mathrm{e}^{-q\tau_{x}}, \tau_{x} < \tau_{u}^{+}]}{\bE_{u}[\mathrm{e}^{-q\tau_{x}}, \tau_{x} < \tau_{z}^{+}]} \nonumber \\
		= &\frac{ W^{(q)}(y,z) / W^{(q)}(x,z) - W^{(q)}(y,u) / W^{(q)}(x,u) }{W^{(q)}(u,z) / W^{(q)}(x,z) }. \nonumber
	\end{align}
\end{proof}

We have the following corollary by the same argument with \cite[Corollary 3.7]{NobaGeneralizedScaleFunc}.

\begin{Cor}
	For $q \geq 0$, define
	\begin{align}
		Z^{(q)}(x,y) :=
		\left\{
		\begin{aligned}
			&1 + q \sum_{x < u < y}W^{(q)}(x,u) & (x < y) \\
			&0 & (x \geq y)
		\end{aligned}
		\right.
		.
		\label{Zfunc}
	\end{align}
	Then it holds for $x < y < z$
	\begin{align}
		\bE_{y}[\mathrm{e}^{-q\tau_{z}^{+}}, \tau_{z}^{+} < \tau_{x}] = Z^{(q)}(y,z) - \frac{W^{(q)}(y,z)}{W^{(q)}(x,z)} Z^{(q)}(x,z). \label{eq51}
	\end{align}
\end{Cor}

\begin{Rem}
	From Theorem \ref{thm:polynomialRepresentation}, the function $q \mapsto Z^{(q)}(x,y)$ is actually a polynomial of $q$ for every fixed $x,y \geq 0$. 
\end{Rem}

\section{Proof of Theorem \ref{thm:polynomialRepresentation}} \label{section:polynomialRepresentation}

We prove Theorem \ref{thm:polynomialRepresentation}.

\begin{proof}[Proof of Theorem \ref{thm:polynomialRepresentation}]
	Since it holds $\tau_{y-1} 1\{\tau_{y-1} < \infty \} = 0 \ \  n_{y}$-a.e. from the downward skip-free property, we see
	\begin{align}
		W^{(q)}(y-1,y) = \frac{1}{n_{y}[\tau_{y-1} = 0]} = \frac{Q(y)}{Q(y)Q(y,y-1)} = \frac{1}{Q(y,y-1)} \quad (y \geq 1). \nonumber
	\end{align}
	Thus, we have
	\begin{align}
		QW^{(q)}(y,y) = Q(y,y-1)W^{(q)}(y-1,y) = 1 \quad (y \geq 1). \label{eq12}
	\end{align}
	Take $y \geq 1$.
	Let $k \geq 1$.
	From Proposition \ref{prop:exitProblem1} it holds
	\begin{align}
		W^{(q)}(y-1,y+k) = \frac{W^{(q)}(y,y+k)}{\bE_{y}[\mathrm{e}^{-q\tau_{y-1}}, \tau_{y-1} < \tau_{y+k}] }. \label{eq07}
	\end{align}
	By considering the one-step transition from $y$, we see 
	\begin{align}
		&\bE_{y}[\mathrm{e}^{-q\tau_{y-1}}, \tau_{y-1} < \tau_{y+k}]  \nonumber \\
		= &\frac{1}{q + Q(y)}\left( Q(y,y-1) + \sum_{1 \leq j \leq k-1} Q(y,y+j) \bE_{y+j}[\mathrm{e}^{-q\tau_{y-1}},\tau_{y-1} < \tau_{y+k}] \right) \nonumber \\
		= &\frac{1}{q + Q(y)}\left( Q(y,y-1) + \sum_{1 \leq j \leq k-1} Q(y,y+j) \frac{W^{(q)}(y+j,y+k)}{W^{(q)}(y-1,y+k)} \right). \label{eq08}
	\end{align}
	Thus, from \eqref{eq07} and \eqref{eq08}, we have
	\begin{align}
		\begin{split}
			0 = &W^{(q)}(y-1,y+k)
			\cdot \left(\frac{Q(y,y-1)}{q + Q(y)}W^{(q)}(y-1,y+k) \right. \\
			&+ \left.\sum_{1 \leq j \leq k-1}\frac{Q(y,y+j)}{q + Q(y)}W^{(q)}(y+j,y+k) - W^{(q)}(y,y+k) \right).
		\end{split}
		\nonumber
	\end{align}
	Since $W^{(q)}(y-1,y+k) > 0$ from (i) in \eqref{generalAssumptions}, we obtain
	\begin{align}
		qW^{(q)}(y,y+k) = \sum_{-1 \leq j < k} Q(y,y+j) W^{(q)}(y+j,y+k), \nonumber
	\end{align}
	which means
	\begin{align}
		Q W^{(q)}(x,y) = q W^{(q)}(x,y) \quad (x \geq 1, \ y \geq x+1). \label{eq13}
	\end{align}
	Since it holds $Q(x,x-k) = 0$ for $x \geq 2$ and $k \geq 2$ with $x-k \geq 0$,
	we see from \eqref{scaleFunc-offDiagonal} that for $k \geq 1$
	\begin{align}
		QW^{(q)}(x,x-k) &= \sum_{n \geq -1}Q(x,x+n) W^{(q)}(x+n,x-k) \nonumber \\
		&= \sum_{-1 \leq n < -k}Q(x,x+n)W^{(q)}(x+n,x-k) = 0. \label{eq11}
	\end{align}
	Hence, from \eqref{eq12}, \eqref{eq13} and \eqref{eq11}, the matrix $F = W^{(q)}$ is a solution of \eqref{eigenFunc}.

	Let $M$ be another solution of \eqref{eigenFunc} and, set $G := W^{(q)} - M$.
	Then $G$ is a triangular matrix satisfying
	\begin{align}
			Q G(x,y) = q G(x,y) \quad  (x \geq 1, y \geq 0). \nonumber
	\end{align}
	Suppose $ G(x,x+l) = 0 \ (x \geq 0, 0 \leq l \leq k)$ for some $k \geq 0$.
	Then it follows for $x \geq 1$
	\begin{align}
		0 = Q G(x,x+k) = Q(x,x-1) G(x-1,x+k). \nonumber
	\end{align}
	Since $Q(x,x-1) > 0 \ (x \geq 1)$ by \eqref{generalAssumptions},
	we obtain $G(x,x+k+1) = 0$ for $x \geq 0$.
	Thus, we see inductively that $G(x,y) = 0$ for every $x,y \geq 0$,
	which shows the solution of \eqref{eigenFunc} is unique.

	Set $M^{(q)}$ as the RHS of \eqref{polynomialRepresentation}.
	Note that since $W$ is triangular, it holds $W^{n}(x,y) = 0$ for $n > y-x$.
	From \eqref{eigenFunc} for $q = 0$, we see $F = M^{(q)}$ is a solution of \eqref{eigenFunc} for $q > 0$.
	Therefore, from the uniqueness of the solution, we obtain $W^{(q)} = M^{(q)}$.
	For $q \in \bC$, it is clear that $\sum_{n \geq 0}q^{n}W^{n} = (I - qW)^{-1}$.
	Thus, we see \eqref{q-scaleRepresentation} from \eqref{polynomialRepresentation}. 
\end{proof}

From the representation \eqref{q-scaleRepresentation}, we may derive the resolvent identity,
whose proof is clear and we omit it.

\begin{Cor} \label{cor:resolventEq}
	For $q,r \in \bC$ it holds
	\begin{align}
		W^{(q)} - W^{(r)} = (q-r) W^{(q)}W^{(r)} = (q-r) W^{(r)}W^{(q)}. \nonumber
	\end{align}
\end{Cor}

We also have a similar identity for $Z^{(q)}$.

\begin{Cor} \label{cor:resolventEqForZ}
	For $q,r \in \bC$ it holds
	\begin{align}
		Z^{(q)} - Z^{(r)} = (q-r) W^{(q)}Z^{(r)} = (q-r) W^{(r)}Z^{(q)}. \label{resolventEqForZ}
	\end{align}
\end{Cor}

\begin{proof}
	When $q = r = 0$, the assertion is obvious.
	For $q \neq 0$ and $r = 0$, from the definition of $Z^{(q)}$, it holds for $0 \leq x < y$ that
	\begin{align}
		Z^{(q)}(x,y) - Z^{(0)}(x,y) &= Z^{(q)}(x,y) - 1 = q \sum_{x < u < y} W^{(q)}(x,u) = q W^{(q)}Z^{(0)}(x,y). \nonumber
	\end{align}
	Let $r,q \neq 0$ and $x < y$.
	From the definition of $Z^{(q)}$, we have on the one hand, 
	\begin{align}
		\sum_{x < u < y}(W^{(q)}(x,u) - W^{(r)}(x,u)) 
		=\frac{Z^{(q)}(x,y)-1}{q} - \frac{Z^{(r)}(x,y)-1}{r}. \label{eq57}
	\end{align}
	On the other hand, it holds from Corollary \ref{cor:resolventEq} and Fubini's theorem that the LHS of \eqref{eq57} is equal to
	\begin{align}
		(q-r) \sum_{x < u < y} W^{(q)} W^{(r)}(x,u) &= (q-r) \sum_{x < v < y} W^{(q)}(x,v) \sum_{v < u < y} W^{(r)}(v,u)m(du) \nonumber \\
		&= \frac{q-r}{r}W^{(q)} Z^{(r)}(x,y) - \frac{q-r}{qr}(Z^{(q)}(x,y) - 1). \label{eq58}
	\end{align}
	From \eqref{eq57} and \eqref{eq58}, we obtain \eqref{resolventEqForZ}.
\end{proof}

We give an elementary estimate for the scale function.

\begin{Prop} \label{prop:estimateForW}
	For $n \geq 1$, it holds
	\begin{align}
		W^{n}(x,y) \leq \frac{W(x,y)}{(n-1)!} \left( \sum_{x < u < y} W(x,u) \right)^{n-1} \quad (0 \leq x < y), \label{eq23}
	\end{align}
	which implies for $q \in \bC$ 
	\begin{align}
		|W^{(q)}(x,y)| \leq W(x,y)\mathrm{e}^{|q|\sum_{x < u < y}W(x,u)} \quad (0 \leq x < y). \nonumber 
	\end{align}
\end{Prop}

\begin{proof}
	For $x \geq 0$, set $F_{x}(t) := \sum_{x < u \leq t} W(x,u) \ (t \in [0,\infty))$.
	Note that by integration by parts (see e.g., \cite[Lemma 5.13.1]{Ito_essentials}), it holds for $n \geq 2$
	\begin{align}
		d (F^{n}_{x})(t) = F_{x}(t)dF^{n-1}_{x}(t) + F_{x}(t-1)^{n-1}dF_{x}(t), \label{eq26}
	\end{align}
	and we see inductively the following inequality of measures:
	\begin{align}
		n F_{x}(t-1)^{n-1} dF_{x}(t) \leq d(F_{x}^{n})(t). \label{eq24}
	\end{align}
	We show \eqref{eq23} by induction.
	Suppose \eqref{eq23} holds for some $n \geq 1$.
	From \eqref{eq26} and \eqref{eq24} we have
	\begin{align}
		W^{n+1}(x,y) = &\sum_{x < u < y}W^{n}(x,u)W(u,y) \nonumber \\
		\leq &\frac{W(x,y)}{(n-1)!} \sum_{x < u < y}W(x,u)F_{x}(u-1)^{n-1} \nonumber \\
		= &\frac{W(x,y)}{(n-1)!} \int_{(x,y)} F_{x}(t-1)^{n-1}dF_{x}(t) \nonumber \\
		\leq & \frac{W(x,y)}{n!} F_{x}(y-1)^{n}. \nonumber
	\end{align}
\end{proof}

\section{Existence of quasi-stationary distributions} \label{section:QSD}

In this section, we always assume \eqref{nonExplosive} holds.
Define for $0 \leq x \leq y$
\begin{align}
	g^{(q)}(x,y) := \bE_{y}[\mathrm{e}^{-q\tau_{x}}] \quad (q \geq 0). \label{}
\end{align}
We especially write $g^{(q)}(x) := g^{(q)}(0,x)$.
Taking limit as $z \to \infty$ in \eqref{potentialDensity}, we have from \eqref{nonExplosive}
\begin{align}
	\begin{split}
		&\int_{0}^{\infty}\mathrm{e}^{-qt} \bP_{y}[X_{t} = u, \tau_{x} > t]dt \\
		= &g^{(q)}(x,y) W^{(q)}(x,u) - W^{(q)}(y,u) \quad (q \geq 0,\ y,u \geq x).
	\end{split}
	 \label{potentialDensityOneside}
\end{align}
For $x \geq 0$, define
\begin{align}
	\lambda^{[x]}_{0} := \sup \{ \lambda \geq 0 \mid \bE_{y}[\mathrm{e}^{\lambda \tau_{x}}] < \infty \quad \text{for every $y > x$} \}, \nonumber
\end{align}
where we note that $\lambda_{0} = \lambda_{0}^{[0]}$, where $\lambda_{0}$ was defined in \eqref{lambdaZero}, the function $x \mapsto \lambda_{0}^{[x]}$ is clearly non-decreasing and $\lambda_{0}^{[x]} < \infty$ from the regularity assumption (ii) of \eqref{generalAssumptions}.
From the analytic extension, the equality \eqref{potentialDensityOneside} can be extended to $q \in \bC$ with $\Re q > -\lambda_{0}^{[x]}$, where $\Re z$ denotes the real part of $z \in \bC$.

The proofs of Theorems \ref{thm:QSDExistenceCharacterization} and \ref{thm:stochasticOrdering}, \ref{thm:spectralBottomCharacterization} are given by compiling auxiliary propositions we show below.

Every quasi-stationary distribution is represented by a scale function. 

\begin{Lem} \label{lem:qsdCharacterization}
	Let $\nu$ be a quasi-stationary distribution such that $\bP_{\nu}[\tau_{0} > t] = \mathrm{e}^{-\lambda t}$ for some $\lambda > 0$.
	Then it holds
	\begin{align}
		\nu(x) = \lambda W^{(-\lambda)}(0,x) \quad (x \geq 0). \nonumber
	\end{align}
\end{Lem}

\begin{proof}
	On the one hand, from \eqref{potentialDensityOneside} for $q = 0$, it holds for $y \geq 0$
	\begin{align}
		\int_{0}^{\infty}\bP_{\nu}[X_{t} = y, \tau_{0} > t]dt = W(0,y) - \sum_{x \geq 0}\nu(x) W(x,y). \nonumber
	\end{align}
	On the other hand, since $\nu$ is a quasi-stationary distribution, we have
	\begin{align}
		\int_{0}^{\infty}\bP_{\nu}[X_{t} = y, \tau_{0} > t]dt = \nu(y)\int_{0}^{\infty}\mathrm{e}^{-\lambda t}dt = \frac{\nu(y)}{\lambda}. \nonumber
	\end{align}
	Thus, we see that
	\begin{align}
		\nu(y) = \lambda W(0,y) - \lambda \sum_{x \geq 0}\nu(x) W(x,y), \nonumber
	\end{align}
	or regarding $\nu$ as a row vector,
	\begin{align}
		\nu (I + \lambda W) = \lambda W(0,\cdot). \nonumber
	\end{align}
	Hence, we have $\nu = \lambda W  (I + \lambda W)^{-1}(0,\cdot) = \lambda W^{(-\lambda)}(0,\cdot)$ from \eqref{q-scaleRepresentation}.
\end{proof}

The following is a key proposition.
We introduce a function $h^{(q)}$ which is useful to characterize existence of a quasi-stationary distribution.

\begin{Prop} \label{prop:nonnegativityImpliesIntegrability-entrance}
	For $x \geq 0$, let $\lambda \in (-\infty,\lambda_{0}^{[x]}]$.
	For $q > -\lambda$, the limit
	\begin{align}
		h^{(q)}(\lambda;x) := \lim_{y \to \infty} \frac{W^{(-\lambda)}(x,y)}{W^{(q)}(x,y)} \label{defOfH}
	\end{align}
	exists (we especially write $h := h^{(0)}$) and finite.
	In addition, it holds
	\begin{align}
		\sum_{y > x} |W^{(-\lambda)}(x,y)| g^{(q)}(x,y)  < \infty \label{eq44}
	\end{align}
	and
	\begin{align}
		h^{(q)}(\lambda;x) = 1 - (\lambda+q) \sum_{y > x}W^{(-\lambda)}(x,y)  g^{(q)}(x,y) . \label{h-funcRepresentation}
	\end{align}
	The function $h^{(q)}$ has the following properties:
	\begin{enumerate}
		\item $0 \leq h^{(q)}(\lambda;x) < 1$.
		\item $h^{(q)}(\lambda;x)$ is non-increasing in $\lambda$ and $q$.
		\item For fixed $q > 0$, the RHS of \eqref{h-funcRepresentation} can be analytically extended in $\lambda \in \bC$ with $|\lambda| < q$.
		\item For fixed $\lambda \in (-\infty,\lambda_{0}^{[x]})$, the RHS of \eqref{h-funcRepresentation} can be analytically extended in $q \in \bC$ with $\Re q > -\lambda$.
		\item If $h^{(q)}(\lambda;x) = 0$ for some $\lambda \in (-\infty, \lambda_{0}^{[x]})$ and $q \in (-\lambda,\infty)$,
		it also holds for every $\lambda \in (-\infty, \lambda_{0}^{[x]}]$ and $q \in (-\lambda,\infty)$.
	\end{enumerate}
\end{Prop}

\begin{proof}
	Fix $x \geq 0$.
	From the definition of $\lambda_{0}^{[x]}$, it clearly holds for $\lambda \in (-\infty,\lambda_{0}^{[x]})$
	\begin{align}
		W^{(-\lambda)}(x,y) = \frac{1}{n_{y}[\mathrm{e}^{\lambda \tau_{x}}, \tau_{x} < \infty]} > 0 \quad (0 \leq x < y). \label{eq54}
	\end{align}
	We also see $W^{(-\lambda_{0}^{[x]})}(x,y) \geq 0$ by the continuity.
	Take $\lambda \in (-\infty,\lambda_{0}^{[x]}]$ and $q  >-\lambda$.
	From Corollary \ref{cor:resolventEq}, 
	\begin{align}
		W^{(-\lambda)}(x,y) = W^{(q)}(x,y) - (\lambda+q) \sum_{x < u < y}W^{(-\lambda)}(x,u)W^{(q)}(u,y). \nonumber
	\end{align}
	From Proposition \ref{prop:exitProblem1}, it holds for $x > 0$
	\begin{align}
		0 \leq \frac{W^{(-\lambda)}(x,y)}{W^{(q)}(x,y)} = 1 - (\lambda+q) \sum_{x < u < y} W^{(-\lambda)}(x,u) \bE_{u}[\mathrm{e}^{-q\tau_{x}},\tau_{x} < \tau_{y}^{+}] . \nonumber
	\end{align}
	Taking the limit as $y \to \infty$, we have from the monotone convergence theorem
	\begin{align}
		0 \leq \lim_{y \to \infty} \frac{W^{(-\lambda)}(x,y)}{W^{(q)}(x,y)} = 1 - (\lambda+q) \sum_{u > x} W^{(-\lambda)}(x,u) g^{(q)}(x,u)  < 1. \label{eq18-entrance}
	\end{align}
	Thus, we obtain \eqref{eq44} and \eqref{h-funcRepresentation}.
	The properties (i) and (ii) are obvious.
	We show (iii).
	Take $q > 0$ and $0 < r < q $. Let $\zeta \in \bC$ with $|\zeta| \leq r$.
	Since it holds $|W^{(-\zeta)}(x,y)| \leq W^{(r)}(x,y)$ from \eqref{polynomialRepresentation} and $\sum_{u > x} W^{(r)}(x,u) g^{(q)}(x,u) \leq (q-r)^{-1}$
	from \eqref{eq18-entrance},
	we see the function $\zeta \mapsto \sum_{u > x} W^{(-\zeta)}(x,u) g^{(q)}(x,u)$ is analytic on $|\zeta| < q$ by Montel's theorem.
	The property (iv) follows similarly since it holds for $q > -\lambda$ and $s \in \bR$ that $|g^{(q+is)}(x,u)| \leq g^{(q)}(x,u)$ and $\sum_{z > x} W^{(-\lambda)}(x,u) g^{(q)}(x,u) \leq (q + \lambda)^{-1}$.
	We finally show (v).
	Suppose $h^{(q')}(\lambda';x) = 0$ for some $\lambda' \in (-\infty,\lambda_{0}^{[x]})$ and $q' \in (-\lambda',\infty)$.
	From (ii), the function $h^{(q')}(\lambda;x)$ is non-increasing in $\lambda$, and it follows $h^{(q')}(\lambda;x) = 0$ for $\lambda \in [\lambda',\lambda_{0}^{[x]}]$.
	Take $r > |\lambda'| \vee q'$.
	Since $h^{(r)}(\lambda;x) = 0$ for $\lambda \in [\lambda', \lambda_{0}^{[x]}]$, it follows from (iii) and the identity theorem that $h^{(r)}(\lambda;x) = 0$ for $\lambda \in (-r,\lambda_{0}^{[x]}]$.
	Take $\lambda \in (-r,\lambda_{0}^{[x]}]$ arbitrarily.
	Since $h^{(q)}(\lambda;x) = 0$ for $q \in [r,\infty)$, it follows from (iv) and the identity theorem that $h^{(q)}(\lambda;x) = 0$ for $q \in (-\lambda,\infty)$.
	Since $r$ can be taken arbitrarily large, we obtain (v).
\end{proof}

From \eqref{h-funcRepresentation} for $q = 0$, we have the integrability of $W^{(-\lambda
)}(x,\cdot) \ (\lambda \in (0,\lambda_{0}^{[x]}])$.

\begin{Cor} \label{cor:ScaleDefinesFiniteMeasure}
	Let $x \geq 0$ and suppose $\lambda_{0}^{[x]} > 0$.
	For $\lambda \in (0, \lambda_{0}^{[x]}]$ it holds
	\begin{align}
		\lambda \sum_{y > x} W^{(-\lambda)}(x,y) = 1 - h (\lambda;x) \in (0,1]. \nonumber
	\end{align}
	It also holds $h(\lambda;x) = \lim_{y \to \infty} Z^{(-\lambda)}(x,y)$.
\end{Cor}

As another corollary, we show the positivity of $W^{(-\lambda_{0})}(0,\cdot)$.

\begin{Cor} \label{cor:non-negativityOfScale-entrance}
	It holds $W^{(-\lambda_{0})}(0,x) > 0$ for $x > 0$.
\end{Cor}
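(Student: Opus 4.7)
The plan is to argue by contradiction: suppose $W^{(-\lambda_{0})}(x_{0}, y_{0}) = 0$ for some $y_{0} > x_{0} \geq 0$. Since we already know $W^{(-\lambda_{0})}(y-1, y) = W(y-1, y) = 1/Q(y, y-1) > 0$ directly, the contradiction should come from propagating the vanishing of $W^{(-\lambda_{0})}$ at $(x_{0}, y_{0})$ down to the entry $(y_{0}-1, y_{0})$.

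First I would analytically extend Proposition \ref{prop:exitProblem1} in $q$ down to $q = -\lambda_{0}$. Taking the lower endpoint $y = 0$ there gives, for $0 < x < z$ and $q \geq 0$,
\begin{align*}
W^{(q)}(x, z) = W^{(q)}(0, z) \cdot \bE_{x}[\mathrm{e}^{-q\tau_{0}}, \tau_{0} < \tau_{z}^{+}].
\end{align*}
The left side is a polynomial in $q$, while the second factor on the right is the Laplace transform of a positive measure. On the event $\{\tau_{0} < \tau_{z}^{+}\}$, the time $\tau_{0}$ is dominated by the exit time of the chain from the finite set $\{1, \ldots, z-1\}$, whose exponential decay rate $c_{z}$ is strictly larger than $\lambda_{0}$; hence this Laplace transform is analytic on $\{\Re q > -c_{z}\}$, and by analytic continuation the identity above extends to $q = -\lambda_{0}$.

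From here the contradiction is short. The expectation $\bE_{x}[\mathrm{e}^{\lambda_{0}\tau_{0}}, \tau_{0} < \tau_{z}^{+}]$ is strictly positive for $0 < x < z$, because the downward skip-free property yields a positive-probability descent $x \to x-1 \to \cdots \to 0$ that stays strictly below $z$. Hence if $x_{0} > 0$, the assumption $W^{(-\lambda_{0})}(x_{0}, y_{0}) = 0$ forces $W^{(-\lambda_{0})}(0, y_{0}) = 0$; the case $x_{0} = 0$ is identical. Since $W^{(-\lambda_{0})}(0,1) = 1/Q(1,0) > 0$ we may assume $y_{0} \geq 2$, and applying the extended identity once more with $x = y_{0}-1$ and $z = y_{0}$ gives $W^{(-\lambda_{0})}(y_{0}-1, y_{0}) = 0$, contradicting $W^{(-\lambda_{0})}(y_{0}-1, y_{0}) = 1/Q(y_{0}, y_{0}-1) > 0$.

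The main obstacle is justifying the strict inequality $c_{z} > \lambda_{0}$ needed to reach $q = -\lambda_{0}$ in the analytic extension. Intuitively, truncating the state space to $\{1, \ldots, z-1\}$ strictly speeds up the exit by adding absorbing mass above $z$, so $c_{z}$ is strictly greater than its limit $\lambda_{0}$ obtained as $z \to \infty$. I would make this precise either via Perron--Frobenius applied to the restricted $Q$-matrix (a finite irreducible sub-generator has a strictly positive spectral gap attained by a positive eigenfunction that fails to be compatible with the sub-Markovian boundary at $z$ for the full chain) or by a direct comparison of the survival probabilities $\bP_{x}[\tau_{0} \wedge \tau_{z}^{+} > t]$ and $\bP_{x}[\tau_{0} > t]$ using the strong Markov property at $\tau_{z}^{+}$.
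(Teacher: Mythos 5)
Your argument reduces the corollary to the claim that the decay rate $c_{z}$ of the exit time $\tau_{0}\wedge\tau_{z}^{+}$ from the finite set $\{1,\dots,z-1\}$ satisfies $c_{z}>\lambda_{0}$ \emph{strictly}, and this is a genuine gap: that claim is not a side technicality but is essentially equivalent to the statement being proved. Indeed, by Theorem \ref{thm:polynomialRepresentation} the map $\lambda\mapsto W^{(-\lambda)}(0,z)$ is a polynomial whose smallest positive root is exactly the Perron root $c_{z}$ of the sub-generator restricted to $\{1,\dots,z-1\}$ (the poles of the killed resolvent $u_{z}^{(q)}$ in \eqref{potentialDensitySPL}-type form come from the zeros of $W^{(q)}(0,z)$), so $W^{(-\lambda_{0})}(0,z)>0$ if and only if $\lambda_{0}<c_{z}$. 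Your two proposed routes do not close this. Perron--Frobenius applied to the truncated $Q$-matrix only gives information about that finite matrix in isolation (and the truncation need not even be irreducible for a general downward skip-free chain, since upward jumps from a state may all overshoot the truncation level); it says nothing about how $c_{z}$ compares with $\lambda_{0}$, which is a quantity of the infinite chain. The comparison $\bP_{x}[\tau_{0}\wedge\tau_{z}^{+}>t]\leq\bP_{x}[\tau_{0}>t]$ via the strong Markov property at $\tau_{z}^{+}$ gives only the non-strict inequality $c_{z}\geq\lambda_{0}$; to get strictness you would have to show that the paths exiting above $z$ carry a portion of the survival probability that is non-negligible at the exponential scale $\mathrm{e}^{-\lambda_{0}t}$, which is not supplied. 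Everything else in your write-up (the identity $W^{(q)}(x,z)=W^{(q)}(0,z)\,\bE_{x}[\mathrm{e}^{-q\tau_{0}},\tau_{0}<\tau_{z}^{+}]$, the positivity of the descent probability, the anchor $W^{(-\lambda_{0})}(y-1,y)=1/Q(y,y-1)$) is fine, but the whole proof stands or falls on $c_{z}>\lambda_{0}$.

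For comparison, the paper avoids this issue entirely and never needs to continue the Laplace transform of $\tau_{0}\wedge\tau_{z}^{+}$ past $-\lambda_{0}$. It starts from the non-negativity $W^{(-\lambda_{0})}(x,\cdot)\geq 0$ (obtained by continuity in Lemma \ref{lem:nonnegativityImpliesIntegrability-entrance}) together with the strict positivity of the total mass $\lambda_{0}\sum_{z>0}W^{(-\lambda_{0})}(x,z)=1-h(\lambda_{0};x)>0$ from Corollary \ref{cor:ScaleDefinesFiniteMeasure}, and then uses the resolvent identity of Corollary \ref{cor:resolventEq} in the form
\begin{align*}
W^{(-\lambda_{0})}(x,y)=W(x,y)-\lambda_{0}\sum_{x<z<y}W^{(-\lambda_{0})}(x,z)W(z,y),
\end{align*}
rewriting $W(z,y)=W(x,y)\,\bP_{z}[\tau_{x}<\tau_{y}^{+}]$ to express $W^{(-\lambda_{0})}(x,y)$ as $W(x,y)$ times a non-negative combination of the already-known non-negative quantities $W^{(-\lambda_{0})}(x,z)$ with strictly positive sum. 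If you want to keep your route, you would need an independent proof that $\lambda_{0}<c_{z}$; the cleanest way I see is precisely to pass through the positivity of the total mass as the paper does, at which point your analytic-continuation machinery becomes unnecessary.
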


\begin{proof}
	If $\lambda_{0} = 0$, the assertion is obvious.
	Suppose $\lambda_{0} > 0$.
	From Corollary \ref{cor:resolventEq} and Corollary \ref{cor:ScaleDefinesFiniteMeasure},
	we have for $x \geq 2$
	\begin{align}
		W^{(-\lambda_{0})}(0,x) &= W(0,x) - \lambda_{0} \sum_{0 < u < x}W^{(-\lambda_{0})}(0,u)W(u,x) \nonumber \\
		&= \frac{\lambda_{0}W(0,x)}{1 - h(\lambda_{0};0)}\sum_{u > 0}W^{(-\lambda_{0})}(0,u) - \lambda_{0} \sum_{0 < u < x}W^{(-\lambda_{0})}(0,u)W(u,x) \nonumber \\
		&= \frac{\lambda_{0} W(0,x)}{1 - h(\lambda_{0};0)} \sum_{u > 0} W^{(-\lambda_{0})}(0,u)
		\left( 1 - (1 - h(\lambda_{0};0)) \bP_{u}[\tau_{0} < \tau_{x}^{+}] \right) \nonumber \\
		&\geq \frac{\lambda_{0} W(0,x)W(0,1)}{1 - h(\lambda_{0};0)} 
		\left( 1 - (1 - h(\lambda_{0};0)) \bP_{1}[\tau_{0} < \tau_{x}^{+}] \right) \nonumber \\
		&> 0, \nonumber 
	\end{align}
	where we note that $W^{(-\lambda_{0})}(0,1) = W(0,1) > 0$ and $\bP_{1}[\tau_{0} < \tau_{x}^{+}] < 1$ from (i) of \eqref{generalAssumptions}.
\end{proof}

The following lemma shows that existence of a quasi-stationary distribution is characterized whether $h^{(q)}(\lambda;0)$ is zero or not.

\begin{Lem} \label{lem:scaleFuncGivesQSD-entrance}
	Suppose $\lambda_{0} > 0$.
	For $0 < \lambda \leq \lambda_{0}$, the (sub)probability distribution
	\begin{align}
		\nu_{\lambda}(x) := \lambda W^{(-\lambda)}(0,x) \quad (x \geq 0) \nonumber
	\end{align}
	is a quasi-stationary distribution with $\bP_{\nu_{\lambda}}[\tau_{0} > t] = \mathrm{e}^{-\lambda t} \ (t \geq 0)$ if and only if $h^{(q)}(\lambda;0) = 0$ for some $q \geq 0$.
	It is also equivalent to 
	\begin{align}
		1 = \sum_{y > 0}\nu_{\lambda}(y). \label{eq19}
	\end{align}
\end{Lem}

\begin{proof}
	Consider the Laplace transform of $\sum_{x > 0} \nu_{\lambda}(x)\bP_{x}[X_{t} = y, \tau_{0} > t] \ (y > 0)$.
	For $q \geq 0$, it holds from Corollary \ref{cor:resolventEq}, \eqref{potentialDensityOneside} and \eqref{h-funcRepresentation}
	\begin{align}
		& \int_{0}^{\infty}\mathrm{e}^{- qt} \sum_{x \geq 0} \nu_{\lambda}(x) \bP_{x}[X_{t} = y, \tau_{0} > t]dt \nonumber \\
		=&\lambda \sum_{x \geq 0} W^{(-\lambda)}(0,x)(g^{(q)}(x)W^{(q)}(0,y) - W^{(q)}(x,y)) \nonumber \\
		=&\lambda \left(\sum_{x \geq 0} W^{(-\lambda)}(0,x)g^{(q)}(x)\right)W^{(q)}(0,y) - \lambda  W^{(-\lambda)} W^{(q)}(0,y) \nonumber \\
		=& \frac{\lambda}{\lambda + q}\left( (1 - h^{(q)}(\lambda;0))W^{(q)}(0,y) - (W^{(q)}(0,y) - W^{(-\lambda)}(0,y)) \right) \nonumber \\
		=& \frac{\lambda W^{(-\lambda)}(0,y) }{\lambda + q} - \frac{\lambda}{\lambda + q} h^{(q)}(\lambda;0) W^{(q)}(0,y). \nonumber 
	\end{align}
	Since it holds 
	\begin{align}
		\frac{\lambda W^{(-\lambda)}(0,y) }{\lambda + q} = \int_{0}^{\infty}\mathrm{e}^{-qt} (\mathrm{e}^{-\lambda t}\nu_{\lambda}(y)) dt, \nonumber
	\end{align}
	we see from Proposition \ref{prop:nonnegativityImpliesIntegrability-entrance} (v) that the measure $\nu_{\lambda}$ is a quasi-stationary distribution if and only if $h^{(q)}(\lambda;0) = 0$ for some $q \geq 0$.
	The equivalence with \eqref{eq19} is clear from Corollary \ref{cor:ScaleDefinesFiniteMeasure}.
\end{proof}

From Lemmas \ref{lem:qsdCharacterization} and \ref{lem:scaleFuncGivesQSD-entrance},
to see existence of a quasi-stationary distribution, it is enough to consider the root of the function $h$.
The following Lemma shows that the function $h$ is largely determined by the boundary classification.

\begin{Lem} \label{lem:BdryClassificationEquivalence}
	Suppose $\lambda_{0} > 0$. 
	The boundary $\infty$ is non-entrance if and only if $h(\lambda;0) = 0$ for some (or equivalently every) $0 < \lambda < \lambda_{0}$.
\end{Lem}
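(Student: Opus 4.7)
The plan is to work with the identity $1 - h(\lambda;0) = \lambda \sum_{y \geq 1} W^{(-\lambda)}(0,y)$ from Corollary \ref{cor:ScaleDefinesFiniteMeasure}: the question reduces to whether this sub-probability mass equals $1$. The essential preliminary is the pointwise comparison $W^{(-\lambda)}(0,y) \leq W(0,y)$ for $\lambda \in (0,\lambda_{0}]$ and $y \geq 1$, which I would obtain from the resolvent identity of Corollary \ref{cor:resolventEq} (with $q = 0$, $r = -\lambda$):
\begin{align*}
W(0,y) - W^{(-\lambda)}(0,y) = \lambda \sum_{z \geq 1} W(0,z) W^{(-\lambda)}(z,y) \geq 0,
\end{align*}
all summands being non-negative by Corollary \ref{cor:non-negativityOfScale-entrance}.

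For the entrance direction, assume $\sum_{y > 0} W(0,y) < \infty$. Since $W^{(-\lambda)}(0,y)$ is a polynomial in $\lambda$ (Theorem \ref{thm:polynomialRepresentation}), $W^{(-\lambda)}(0,y) \to W(0,y)$ as $\lambda \downarrow 0$ for each fixed $y$, and the above pointwise bound lets me apply dominated convergence to conclude $\sum_{y} W^{(-\lambda)}(0,y) \to \sum_{y} W(0,y) < \infty$. Hence $\lambda \sum_{y} W^{(-\lambda)}(0,y) \to 0$, i.e., $h(\lambda;0) \to 1$ as $\lambda \downarrow 0$, so $h(\lambda;0) > 0$ for all small $\lambda > 0$. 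The contrapositive of Lemma \ref{lem:nonnegativityImpliesIntegrability-entrance}(v) (taken with $q = 0$) then promotes this to $h(\lambda;0) > 0$ for every $\lambda \in (0,\lambda_{0})$.

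For the natural direction I would argue by contradiction: suppose some $\lambda \in (0,\lambda_{0})$ had $h(\lambda;0) > 0$. Then the defining limit \eqref{defOfH} gives $W^{(-\lambda)}(0,y) \geq \tfrac{1}{2} h(\lambda;0) W(0,y)$ for all sufficiently large $y$, so $\sum_{y} W^{(-\lambda)}(0,y) = \infty$ by the natural boundary hypothesis $\sum_{y} W(0,y) = \infty$. This contradicts the uniform bound $\lambda \sum_{y} W^{(-\lambda)}(0,y) \leq 1$ from Corollary \ref{cor:ScaleDefinesFiniteMeasure}. The ``some $\Leftrightarrow$ every'' in the statement is exactly Lemma \ref{lem:nonnegativityImpliesIntegrability-entrance}(v). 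The main conceptual point (rather than a genuine obstacle) is recognising that $\sum_{y} W^{(-\lambda)}(0,y)$ is always finite for $\lambda > 0$: given that, the asymptotic comparison between $W^{(-\lambda)}(0,\cdot)$ and $W(0,\cdot)$ cleanly separates the entrance and natural cases, and no further real work is needed.
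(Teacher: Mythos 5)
Your proof is correct and follows essentially the same route as the paper: the ``natural $\Rightarrow h=0$'' direction is the paper's argument almost verbatim, and your entrance direction is just the contrapositive of the paper's converse, with the paper's monotone-convergence step (using that $W^{(-\lambda)}(0,y)$ decreases in $\lambda$) replaced by dominated convergence under the majorant $W(0,\cdot)$ supplied by the resolvent identity. Both arguments rest on the identity $\lambda\sum_{y}W^{(-\lambda)}(0,y)=1-h(\lambda;0)$ from Corollary \ref{cor:ScaleDefinesFiniteMeasure} and on Lemma \ref{lem:nonnegativityImpliesIntegrability-entrance}(v) to pass between ``some'' and ``every''.
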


\begin{proof}
	Suppose the boundary $\infty$ is non-entrance.
	Take $\lambda \in (0,\lambda_{0})$.
	If $h(\lambda;0) \neq 0$, it follows from \eqref{defOfH} for $q = 0$
	\begin{align}
		\lim_{y \to \infty}\frac{W^{(-\lambda)}(0,y)}{W(0,y)} \in (0,1). \nonumber
	\end{align}
	Since $\sum_{y \geq 0}W^{(-\lambda)}(0,y) < \infty$ from Corollary \ref{cor:ScaleDefinesFiniteMeasure},
	it follows $\sum_{y \geq 0}W(0,y) < \infty$ and contradicts to the non-entrance condition.

	Suppose $h(\lambda';0) = 0$ for some $\lambda' \in (0,\lambda_{0})$.
	From Proposition \ref{prop:nonnegativityImpliesIntegrability-entrance} (v), it also holds for every $\lambda \in (0,\lambda_{0})$.
	From Corollary \ref{cor:resolventEq} and Lemma \ref{lem:scaleFuncGivesQSD-entrance} we have
	\begin{align}
		\frac{1}{\lambda} = \sum_{y \geq 0}W^{(-\lambda)}(0,y) \leq \sum_{y \geq 0}W(0,y). \nonumber
	\end{align}
	Taking limit as $\lambda \to 0+$, we obtain the non-entrance condition.
\end{proof}

From Lemmas \ref{lem:qsdCharacterization}, \ref{lem:scaleFuncGivesQSD-entrance} and \ref{lem:BdryClassificationEquivalence},
we easily see that Theorem \ref{thm:QSDExistenceCharacterization} (ii), the non-entrance boundary case, holds.

We prove Theorem \ref{thm:stochasticOrdering}.

\begin{proof}[Proof of Theorem \ref{thm:stochasticOrdering}]
	It is enough to show the function
	\begin{align}
		f_{x}(\lambda) :=  \frac{W^{(-\lambda)}(0,x)}{W^{(-\lambda)}(0,x+1)} \quad (\lambda \in (0,\lambda_{0}]) \nonumber
	\end{align}
	is strictly increasing for every $x \geq 1$.
	Indeed, if it holds, we have for $x \geq 1$ and $0 < \lambda' < \lambda \leq \lambda_{0}$
	\begin{align}
		\nu_{\lambda}(x)\nu_{\lambda'}(x+1) - \nu_{\lambda'}(x)\nu_{\lambda}(x+1) \nonumber 
		= \nu_{\lambda}(x+1) \nu_{\lambda'}(x+1) (f_{x}(\lambda) - f_{x}(\lambda')) > 0. \nonumber
	\end{align}
	From Theorem \ref{thm:potentialDensity}, we have for $q \geq 0$
	\begin{align}
		\bE_{x}\left[ \int_{0}^{\tau_{0} \wedge \tau_{x+1}^{+}} \mathrm{e}^{-q t} dL_{t}^{x} \right] = \frac{W^{(q)}(0,x)W(x,x+1)}{W^{(q)}(0,x+1)}, \label{}
	\end{align}
	where we note that $W^{(q)}(x,x+1) = W(x,x+1)$.
	Then we have
	\begin{align}
		f_{x}(\lambda) = \frac{1}{W(x,x+1)} \bE_{x}\left[ \int_{0}^{\tau_{0} \wedge \tau_{x+1}^{+}} \mathrm{e}^{\lambda t} dL_{t}^{x} \right] \quad (\lambda \in (0,\lambda_{0}]) \nonumber
	\end{align}
	by the analytic extension and the monotone convergence theorem,
	which shows $f_{x}$ is strictly increasing.
\end{proof}

In order to show Theorem \ref{thm:QSDExistenceCharacterization} (i), we focus on the entrance boundary case.
Under the entrance condition, the function $W^{(q)}(x,\cdot)$ is integrable for every $q \in \bC$.

\begin{Lem} \label{lem:analyticExtentionOfh}
	Suppose the boundary $\infty$ is entrance.
	Then it holds for every $R > 0$
	\begin{align}
		\sup_{\zeta \in \bC, |\zeta| \leq R}\sum_{y > x} |W^{(\zeta)}(x,y)| < \infty \quad \text{for every $x \geq 0$}. \label{eq21}
	\end{align}
	In addition, the limit $Z^{(q)}(x) := \lim_{y \to \infty}Z^{(q)}(x,y) \ (q \in \bC, x \geq 0)$
	exists and
	\begin{align}
		Z^{(q)}(x) = 1 + q \sum_{y > x} W^{(q)}(x,y) \quad (q \in \bC, \ x \geq 0). \label{eq50}
	\end{align}
	Hence, the function $q \mapsto Z^{(q)}(x) \ (x \geq 0)$ is an entire function.
	Moreover, it holds
	\begin{align}
		g^{(q)}(x,y) = \frac{Z^{(q)}(y)}{Z^{(q)}(x)} \quad (q > -\lambda_{0}^{[x]},\  y \geq x), \label{eq53}
	\end{align}
	and thus the function $q \mapsto g^{(q)}(x,y) \ (y \geq x)$ is analytically extended to a meromorphic function on $\bC$.
\end{Lem}

\begin{proof}
	From Proposition \ref{prop:estimateForW} and the entrance condition,
	we easily see \eqref{eq21} and \eqref{eq50} hold.
	From \eqref{eq51} it holds for $0 \leq x < y < z$ and $q \geq 0$
	\begin{align}
		\frac{Z^{(q)}(y,z)}{Z^{(q)}(x,z)} = \frac{W^{(q)}(y,z)}{W^{(q)}(x,z)} + \frac{\bE_{y}[\mathrm{e}^{-q\tau_{z}^{+}},\tau_{z}^{+} < \tau_{x}]}{Z^{(q)}(x,z)}. \nonumber
	\end{align}
	Since it holds $\lim_{z \to \infty}\bE_{y}[\mathrm{e}^{-q\tau_{z}^{+}},\tau_{z}^{+} < \tau_{x}] = 0$ from \eqref{nonExplosive}, we have
	\begin{align}
		g^{(q)}(x,y) = \lim_{z \to \infty} \frac{W^{(q)}(y,z)}{W^{(q)}(x,z)} = \lim_{z \to \infty} \frac{Z^{(q)}(y,z)}{Z^{(q)}(x,z)} = \frac{Z^{(q)}(y)}{Z^{(q)}(x)}. \label{eq55}
	\end{align}
	From Corollary \ref{cor:ScaleDefinesFiniteMeasure} and Lemma \ref{lem:BdryClassificationEquivalence} it holds that 
	\begin{align}
		Z^{(q)}(x) = h(-q;x) > 0 \quad (q \in (-\lambda_{0}^{[x]},0)) \label{eq56}
	\end{align}
	when $\lambda_{0}^{[x]} > 0$,
	and the equality \eqref{eq53} is analytically extended to $q \in \bC$ with $\Re q > -\lambda_{0}^{[x]}$.
\end{proof}

For $0 \leq x < y$, define 
\begin{align}
	\lambda_{0}^{[x]}(y) := \sup \{ \lambda \geq 0 \mid \bE_{y}[\mathrm{e}^{\lambda \tau_{x} }] < \infty \}. \label{}
\end{align}	

\begin{Rem} \label{rem:remarkOnLambdaXY}
	It holds $\lambda_{0}^{[0]}(y) = \lambda_{0} \ (y > 0)$ by the irreducibility condition (i) in \eqref{generalAssumptions}.
	It also holds from the downward skip-free property that the function $\{x+1,x+2,\cdots\} \ni y \mapsto \lambda_{0}^{[x]}(y)$ is non-increasing, and that $\lambda_{0}^{[x]}(y) \leq \inf_{x < u \leq y} Q(u) < \infty \ (x < y)$, where we note the obvious equality $\bP_{y}[\tau_{\bN \setminus \{y\} } \geq t] = \mathrm{e}^{-Q(y)t} \ (t \geq 0)$.
\end{Rem}

The following lemma and its corollary show that under the entrance condition, it always holds $\lambda_{0} > 0$ and $h(\lambda_{0};0) = 0$.
From Lemmas \ref{lem:scaleFuncGivesQSD-entrance} and \ref{lem:BdryClassificationEquivalence}, that implies there exists a unique quasi-stationary distribution $\nu_{\lambda_{0}}$ and completes the proof of Theorem \ref{thm:QSDExistenceCharacterization} (i).
The following proof owes to \cite[Proposition 3]{BertoinQSD}.

\begin{Lem} \label{lem:r-recurrence}
	Assume the boundary $\infty$ is entrance.
	Let $0 \leq x < y$.
	Then it holds that $\lambda_{0}^{[x]}(y) > 0$ and that
	the function $\bC \ni q \mapsto Z^{(q)}(y) / Z^{(q)}(x)$ has a pole at $q = -\lambda_{0}^{[x]}(y)$, which is a pole with the minimum absolute value.
	In particular, it holds
	\begin{align}
		\lim_{q \to -\lambda_{0}^{[x]}(y)+}g^{(q)}(x,y) = \infty. \label{}
	\end{align}
\end{Lem}

\begin{proof}
	Take $y > x \geq 0$.
	Since the LHS of \eqref{eq53} is analytic for $q \in \{ \zeta \in \bC \mid \Re \zeta >
	 -\lambda_{0}^{[x]}(y) \}$,
	it holds 
	\begin{align}
		\text{the function $Z^{(q)}(y) / Z^{(q)}(x)$ does not have a pole at $q \in \{ \zeta \in \bC \mid \Re \zeta > - \lambda_{0}^{[x]}(y) \}$}. \label{eq35-c}
	\end{align}
	Suppose there exists a pole of the function $\bC \ni q \mapsto Z^{(q)}(y) / Z^{(q)}(x)$ and let $\zeta'$ be the one with the minimum absolute value.
	Note that $\zeta' \neq 0$ and thus $|\zeta'| > 0$ since $Z^{(0)}(y) / Z^{(0)}(x) = 1$.
	We show $\zeta' = -\lambda_{0}^{[x]}(y)$.
	From \eqref{eq35-c}, we see that $\Re \zeta' \leq -\lambda_{0}^{[x]}(y)$, which implies $|\zeta'| \geq \lambda_{0}^{[x]}(y)$.
	Suppose $|\zeta'| > \lambda_{0}^{[x]}(y)$.
	Then the function $Z^{(q)}(y) / Z^{(q)}(x)$ has a power series expansion around $0$ for $\zeta \in \bC$ with $|\zeta| < |\zeta'|$ and
	the coefficient of $\zeta^{n}$ is given by the $n$-th right-derivative of $g^{(q)}(x,y)$ at $\zeta = 0$ divided by $n!$, i.e., $(-1)^{n}\bE_{y}[\tau_{x}^{n}] / n!$.
	Since the series absolutely converges for $\zeta \in \bC$ with $|\zeta| < |\zeta'|$, it follows $\bE_{y}[\mathrm{e}^{(|\zeta'| - \delta)\tau_{x}}] < \infty$ for every $\delta > 0$.
	It contradicts to the definition of $\lambda_{0}^{[x]}(y)$, and thus it follows $\zeta' = -\lambda_{0}^{[x]}(y)$.
	When we suppose there are no poles of the function $Z^{(q)}(y) / Z^{(q)}(x)$,
	by the same argument above it follows $\lambda_{0}^{[x]}(y) = \infty$ and it is impossible from Remark \ref{rem:remarkOnLambdaXY}.
\end{proof}

The following corollary easily follows from Lemma \ref{lem:r-recurrence} for $x = 0$, Remark \ref{rem:remarkOnLambdaXY} and \eqref{eq56}. 

\begin{Cor}
	When the boundary $\infty$ is entrance, it holds $\lambda_{0} > 0$, $h(\lambda_{0};0) = 0$ and 
	\begin{align}
		\lambda_{0} = \min \{ \lambda \geq 0 \mid Z^{(-\lambda)}(0) = 0 \}. \label{eq45}
	\end{align}
\end{Cor}

We prove Theorem \ref{thm:spectralBottomCharacterization}.

\begin{proof}[Proof of Theorem \ref{thm:spectralBottomCharacterization}]
	Assume there exists $\lambda' > \lambda_{0}$ such that $W^{(-\lambda')}(0,x) > 0$ for every $x > 0$.
	From Corollary \ref{cor:non-negativityOfScale-entrance}, it is enough to show that this assumption leads to a contradiction.
	From the same argument in Proposition \ref{prop:nonnegativityImpliesIntegrability-entrance},
	the limit $h^{(q)}(\lambda';0) :=  \lim_{y \to\infty} W^{(-\lambda')}(0,y) / W^{(q)}(0,y)$ exists for $q \in (-\lambda_{0},\infty) \cup \{0\}$ and it holds
	\begin{align}
		h^{(q)}(\lambda';0) = 1 - (\lambda' + q) \sum_{u > 0} W^{(-\lambda')}(0,u)g^{(q)}(u) \in [0,1). \label{eq43}
	\end{align}
	Note that this implies $\lambda'\sum_{u > 0}W^{(-\lambda')}(0,u) \in (0,1]$.
	Let the boundary $\infty$ be non-entrance.
	If $h^{(q)}(\lambda';0) > 0$ for some $q \geq 0$,
	it follows from the same argument in Lemma \ref{lem:BdryClassificationEquivalence} that $\sum_{u > 0}W^{(q)}(0,u) < \infty$ and that contradicts to the non-entrance condition.
	Thus, we have $h^{(q)}(\lambda';0) = 0$ for $q \geq 0$.
	Then the same computation in Lemma \ref{lem:qsdCharacterization} shows that $\nu_{\lambda'}(x):= \lambda' W^{(-\lambda')}(0,x) \ (x \geq 0)$ is a quasi-stationary distribution such that $\bP_{\nu_{\lambda'}}[\tau_{0} > t] = \mathrm{e}^{-\lambda' t}$, which is, however, impossible from the definition of $\lambda_{0}$.
	Let the boundary $\infty$ be entrance.
	Taking limit as $q \to -\lambda_{0}+$ in \eqref{eq43}, we see from Lemma \ref{lem:r-recurrence} that $\lim_{q \to -\lambda_{0}+}h^{(q)}(\lambda) = -\infty$ and it contradicts to the non-negativity of $h^{(q)}(0;\lambda)$.
\end{proof}

\section{Yaglom limit for the entrance boundary case} \label{section:YaglomLimit}

When the boundary $\infty$ is entrance, 
we may further show the \textit{$\lambda_{0}$-positive recurrence}, which enables us to apply the $R$-theory for Markov chains (see, e.g., Kingman \cite{KingmanRrecurrence} and Anderson \cite[Chapter 5.2]{Anderson}) and derive the Yaglom limit. 
Also in this section, we always assume \eqref{nonExplosive}.

We first show the following.

\begin{Lem} \label{lem:lambdaX}
	Suppose the boundary $\infty$ is entrance.
	Then the following holds:
	\begin{enumerate}
		\item For every $x > 0$, it holds $Z^{(-\lambda_{0})}(x) \in (0,1)$ and the function $\bN \ni x \mapsto Z^{(-\lambda_{0})}(x)$ is strictly increasing and satisfies $\lim_{x \to \infty} Z^{(-\lambda_{0})}(x) = 1$.
		\item The function $\bN \ni x \mapsto Z^{(-\lambda_{0})}(x)$ is $\lambda_{0}$-invariant, that is,
		\begin{align}
			\bE_{x}[Z^{(-\lambda_{0})}(X_{t}),\tau_{0} > t] = \mathrm{e}^{-\lambda_{0}t}Z^{(-\lambda_{0})}(x) \quad (x \geq 0). \nonumber
		\end{align}
		\item For every $0 < x < y$, it holds $\lambda_{0}^{[x]}(y) > \lambda_{0}$.
		\item The function $\bC \ni q \mapsto Z^{(q)}(0)$ has a simple root at $q = -\lambda_{0}$ and
		\begin{align}
			\rho := \left. \frac{d}{dq} Z^{(q)}(0) \right|_{q = -\lambda_{0}} = \sum_{u > 0}W^{(-\lambda_{0})}(0,u)Z^{(-\lambda_{0})}(u) \in (0,\infty). \label{}
		\end{align}
	\end{enumerate}
\end{Lem}

\begin{proof}
	From Proposition \ref{prop:nonnegativityImpliesIntegrability-entrance}, Corollary \ref{cor:ScaleDefinesFiniteMeasure} and the inequality $\lambda_{0}^{[x]} \geq \lambda_{0} \ (x > 0)$, we see $Z^{(-\lambda_{0})}(x) \in [0,1)$.
	From \eqref{potentialDensityOneside}, \eqref{eq53}, \eqref{eq45} and Corollary \ref{cor:resolventEqForZ},
	we have for $q \in (-\lambda_{0},\infty)$ and $x > 0$
	\begin{align}
		&\int_{0}^{\infty}\mathrm{e}^{-qt} \bE_{x}[Z^{(-\lambda_{0})}(X_{t}), \tau_{0} > t] dt \label{eq64} \\
		= &\sum_{u > 0} \left( \frac{Z^{(q)}(x)}{Z^{(q)}(0)}W^{(q)}(0,u) - W^{(q)}(x,u) \right) Z^{(-\lambda_{0})}(u) \label{} \\
		= & \lim_{z \to \infty} \left( \frac{Z^{(q)}(x)}{Z^{(q)}(0)} W^{(q)} Z^{(-\lambda_{0})}(0,z) - W^{(q)} Z^{(-\lambda_{0})}(x,z)\right) \label{} \\
		= & \frac{1}{q + \lambda_{0}} \left( \frac{Z^{(q)}(x)}{Z^{(q)}(0)}(Z^{(q)}(0) - Z^{(-\lambda_{0})}(0)) - (Z^{(q)}(x) - Z^{(-\lambda_{0})}(x)) \right) \label{} \\
		= & \frac{Z^{(-\lambda_{0})}(x)}{q + \lambda_{0}}. \label{} 
	\end{align}
	Since the process $X$ is right-continuous and the function $I \ni x \mapsto Z^{(-\lambda_{0})}(x)$ is bounded,
	we see (ii) holds.
	If $Z^{(-\lambda_{0})}(x) = 0$ for some $x > 0$, it follows from \eqref{generalAssumptions} and \eqref{eq64} that $Z^{(-\lambda_{0})}(x) = 0$ every $x > 0$.
	It is, however, impossible because $\lim_{x \to \infty} Z^{(-\lambda_{0})}(x) = 1$,
	which can be seen from $\sum_{y > x} W^{(-\lambda_{0})}(x,y) \leq \sum_{y > x}W(0,y) \to 0 \ (x \to \infty)$.
	Thus, we obtain $Z^{(-\lambda_{0})}(x) \in (0,1) \ (x > 0)$.
	Since $Z^{(-\lambda)}(x) > 0 \ (\lambda \in [0,\lambda_{0}], x > 0)$,
	the assertion (iii) follows from Lemma \ref{lem:r-recurrence}.
	For $y > x$, it follows from \eqref{eq53} and (iii)
	\begin{align}
		Z^{(-\lambda_{0})}(y) = \bE_{y}[\mathrm{e}^{\lambda_{0} \tau_{x}}] Z^{(-\lambda_{0})}(x) > Z^{(-\lambda_{0})}(x), \label{}
	\end{align}
	and (i) is shown.
	From Corollary \ref{cor:resolventEq} and Fubini's theorem, we have
	\begin{align}
		Z^{(q)}(0) = Z^{(q)}(0) - Z^{(-\lambda_{0})}(0) = (q - (-\lambda_{0})) \sum_{u > 0}W^{(q)}(0,u)Z^{(-\lambda_{0})}(u). \label{}
	\end{align}
	Thus, it follows
	\begin{align}
		\left. \frac{d}{dq} Z^{(q)}(0) \right|_{q = -\lambda_{0}} = \sum_{u > 0}W^{(-\lambda_{0})}(0,u)Z^{(-\lambda_{0})}(u) > 0, \label{}
	\end{align}
	and (iv) holds.
\end{proof}

The following is the main theorem in this section, which obviously implies Theorem \ref{thm:YaglomSimplified}.

\begin{Thm} \label{thm:YaglomLimit}
	Suppose the boundary $\infty$ is entrance.
	Then the following holds: 
	\begin{enumerate}
		\item The process $X$ is $\lambda_{0}$-positive recurrent, that is, the limit $\lim_{t \to \infty} \mathrm{e}^{\lambda_{0}t}\bP_{x}[X_{t} = x, \tau_{0} > t]$ exists and positive for every $x > 0$. More precisely, it holds
		\begin{align}
			\lim_{t\to \infty} \mathrm{e}^{\lambda_{0}t}\bP_{x}[X_{t} = y,\tau_{0} > t] = \frac{Z^{(-\lambda_{0})}(x)W^{(-\lambda_{0})}(0,y)}{\rho}  \quad (x,y > 0). \nonumber
		\end{align}
		\item It holds for $x \geq 0$
		\begin{align}
			\lim_{t \to \infty} \mathrm{e}^{\lambda_{0}t}\bP_{x}[\tau_{0} > t] = \frac{ Z^{(-\lambda_{0})}(x)}{\rho\lambda_{0}}. \nonumber
		\end{align}
		\item The quasi-stationary distribution $\nu_{\lambda_{0}}$ is the Yaglom limit:
		\begin{align}
			\lim_{t \to \infty} \bP_{x}[X_{t} = y \mid \tau_{0} > t] = \nu_{\lambda_{0}}(y). \quad (x,y > 0). \nonumber
		\end{align}
	\end{enumerate}
\end{Thm}

\begin{proof}
	From \eqref{potentialDensityOneside}, Lemma \ref{lem:lambdaX} (i), (iv) and Karamata's Tauberian theorem \cite[Theorem 1.7.1]{Regularvariation}, we have
	\begin{align}
		\lim_{t \to \infty}\frac{1}{t}\int_{0}^{t} \mathrm{e}^{\lambda_{0}s} \bP_{x}[X_{s} = y, \tau_{0} > s] ds = \frac{Z^{(-\lambda_{0})}(x)W^{(-\lambda_{0})}(0,y)}{\rho} \quad (x,y > 0). \label{}
	\end{align}
	From \cite[p.177]{Anderson}, the limit $\lim_{t \to \infty}\mathrm{e}^{\lambda_{0}t}\bP_{x}[X_{t} = y, \tau_{0} > t]$ exists and thus (i) holds.
	The assertions (ii) and (iii) follows from \cite[Proposition 2.9, Chapter 5]{Anderson}.
\end{proof}
\appendix

\section{Infinity as an instantaneous entrance boundary}

In this appendix, we assume the condition \eqref{nonExplosive} holds.
Set $\bar{\bN} := \bN \cup \{\infty\}$ and equip $\bar{\bN}$ with the topology induced from the one-point compactification.
When the boundary $\infty$ is entrance, following Foucart, Li and Zhou \cite{entranceBdry} we can add $\infty$ to the state space as an \textit{instantaneous entrance boundary} (see \cite[Definition 1.1]{entranceBdry}) and extend $X$ to a Feller process on $\bar{\bN}$, that is,
there exists a probability measure $\bP_{\infty}$ (or $\bE_{\infty}$ for the expectation) on $\bD(\bar{\bN})$, the set of c\`adl\`ag paths on $\bar{\bN}$, and for every function $f \in C_{b}(\bar{\bN}) := \{ g: \bar{\bN} \to \bR \mid \lim_{x \to \infty} g(x) = g(\infty) \}$ the following holds:
\begin{enumerate}
	\item $P_{t}f(x) := \bE_{x}[f(X_{t}),\tau_{0} > t] \in C_{b}(\bar{\bN})$.
	\item $\lim_{t \to 0+}P_{t}f(x) = f(x) \quad (x \in \bar{\bN})$
\end{enumerate}
(see e.g., Kallenberg \cite[p.369]{Kallenberg}).
Let us consider the following condition (see \cite[Definition 1.1]{entranceBdry}):
\begin{align}
	\text{For every $t > 0$,} \quad \lim_{x \to \infty} \liminf_{y \to \infty} \bP_{y}[\tau_{x} \leq t] = 1. \label{instantaneousEntrance}
\end{align}

We show the entrance condition is equivalent to \eqref{instantaneousEntrance}.

\begin{Prop}
	The condition \eqref{instantaneousEntrance} holds if and only if the boundary $\infty$ is entrance.
\end{Prop}

\begin{proof}
	From \cite[Lemma 1.2]{entranceBdry}, the condition \eqref{instantaneousEntrance} is equivalent to  $\sup_{x \geq 0} \bE_{x}[\tau_{0}] < \infty$. 
	From \eqref{potentialDensityOneside}, it holds
	\begin{align}
		\sup_{x \geq 0}\bE_{x}[\tau_{0}] = \lim_{x \to \infty} \sum_{y \geq 0} (W(0,y) - W(x,y)) = \sum_{y \geq 0} W(0,y). \nonumber
	\end{align}
\end{proof}

As a direct consequence of \cite{entranceBdry}, we obtain the following:

\begin{Prop}[{\cite[Theorem 2.2]{entranceBdry}}]
	Suppose the boundary $\infty$ is entrance.
	Then the process $X$ can be extended to a Feller process on $\bar{\bN}$ and the following holds:
	\begin{align}
		\bP_{\infty}[X_{0} = \infty] \quad \text{and} \quad \bP_{\infty}[X_{t} \in \bN \quad \text{for every $t > 0$}] = 1. \nonumber
	\end{align}
\end{Prop}

\begin{Prop}[{\cite[Corollary 2.7]{entranceBdry}}]
	Suppose the boundary $\infty$ is entrance.
	Then the following convergence on $D(\bar{\bN})$ holds in the sense of Skorokhod's $J_{1}$-topology (see e.g.,\cite[Chapter 3.3]{StochasticProcessLimits}):
	\begin{align}
		\bP_{x}[X \in de] \xrightarrow[]{x \to \infty} \bP_{\infty}[X \in de] \quad (e \in D(\bar{\bN})). \nonumber
	\end{align}
\end{Prop}

We show the Yaglom limit exists for $X$ starting from $\infty$.

\begin{Thm}
	Suppose $\infty$ is entrance.
	Then for every $A \subset \bN$, the following holds:
	\begin{align}
		\lim_{t \to \infty} \mathrm{e}^{\lambda_{0}t}\bP_{\infty}[X_{t} \in A, \tau_{0} > t] = \frac{\nu_{\lambda_{0}}(A)}{\rho \lambda_{0}}. \nonumber
	\end{align}
\end{Thm}

\begin{proof}
	Take $\theta > \lambda_{0}$.
	From \cite[Proposition 2.4]{entranceBdry}, there exists $x_{\theta} > 0$ such that $\bE_{\infty}[\mathrm{e}^{\theta \tau_{x_{\theta}}}] < \infty$.
	Since it holds
	\begin{align}
		&\mathrm{e}^{\lambda_{0}t}|\bP_{\infty}[X_{t} \in A, \tau_{0} > t] - \bP_{\infty}[X_{t} \in A, \tau_{0} > t, \tau_{x_{\theta}} \leq t]| \nonumber \\
		\leq &\mathrm{e}^{\lambda_{0}t} \bP_{\infty}[\tau_{x_{\theta}} > t] \nonumber \\
		\leq &\mathrm{e}^{-(\theta - \lambda_{0})t} \bE_{\infty}[\mathrm{e}^{\theta \tau_{x_{\theta}}}] \xrightarrow[]{t \to \infty} 0, \nonumber
	\end{align}
	it is enough to show
	\begin{align}
		\lim_{t \to \infty}\mathrm{e}^{\lambda_{0}t} \bP_{\infty}[X_{t} \in A, \tau_{0} > t,\tau_{x_{\theta}} \leq t] = \frac{\nu_{\lambda_{0}}(A)}{\rho \lambda_{0}}. \nonumber
	\end{align}
	From the Markov property and Theorem \ref{thm:YaglomLimit} (iii),
	we see $\mathrm{e}^{\lambda_{0}t}\bP_{x_{\theta}}[X_{t} \in A, \tau_{0} > t]$ is bounded in $t \geq 0$ and converges to $ (\rho \lambda_{0})^{-1} Z^{(-\lambda_{0})}(x_{\theta}) \nu_{\lambda_{0}}(A)$ as $t \to \infty$.
	From the dominated convergence theorem we have
	\begin{align}
		&\mathrm{e}^{\lambda_{0}t} \bP_{\infty}[X_{t} \in A, \tau_{0} > t,\tau_{x_{\theta}} \leq t] \nonumber \\
		= &\int_{0}^{t} \mathrm{e}^{\lambda_{0}(t-u)}\bP_{x_{\theta}}[X_{t-u} \in A, \tau_{0} > t-u] \mathrm{e}^{\lambda_{0}u}\bP_{\infty}[\tau_{x_{\theta}} \in du] \nonumber \\
		\xrightarrow[]{t \to \infty} &\frac{Z^{(-\lambda_{0})}(x_{\theta})}{\rho \lambda_{0}}\nu_{ \lambda_{0}}(A)\bE_{\infty}[\mathrm{e}^{\lambda_{0}\tau_{x_{\theta}}}]. \nonumber
	\end{align}
	From \cite[Proposition 2.4]{entranceBdry} and Lemma \ref{lem:lambdaX} (i), we see 
	\begin{align}
		Z^{(-\lambda_{0})}(x_{\theta}) \bE_{\infty}[\mathrm{e}^{\lambda_{0}\tau_{x_{\theta}}}]
		&= Z^{(-\lambda_{0})}(x_{\theta}) \lim_{z \to \infty} \bE_{z}[\mathrm{e}^{\lambda_{0}\tau_{x_{\theta}}}] \nonumber \\
		&= Z^{(-\lambda_{0})}(x_{\theta}) \lim_{z \to \infty} \frac{Z^{(-\lambda_{0})}(z)}{Z^{(-\lambda_{0})}(x_{\theta})} \nonumber \\
		&= \lim_{z \to \infty} Z^{(-\lambda_{0})}(z) \nonumber \\
		&= 1. \nonumber
	\end{align}
\end{proof}

\bibliography{arxiv03.bbl}
\bibliographystyle{plain}

\end{document}